\numberwithin{equation}{section}
\renewcommand{\thesection}{\arabic{section}}
\long\def\eatit#1{}
\newtheorem{thm}{Theorem}[subsection]
\newtheorem{prop}[thm]{Proposition}
\newtheorem{lem}[thm]{Lemma}
\newtheorem{cor}[thm]{Corollary}
\theoremstyle{definition}
\newtheorem{Ex}[thm]{Example}
\newtheorem{Rmk}[thm]{Remark}
\begin{document}


\title{Comparing Powers and Symbolic Powers of Ideals}

\author{Cristiano Bocci \& Brian Harbourne}

\address{Cristiano Bocci\\ 
Dipartimento di Scienze Matematiche e Informatiche "R. Magari"\\
Universit\`a degli Studi di Siena\\
Pian dei mantellini, 44\\
53100 Siena, Italy}
\email{bocci24@unisi.it}

\address{Brian Harbourne\\
Department of Mathematics\\
University of Nebraska\\
Lincoln, NE 68588-0130 USA}
\email{bharbour@math.unl.edu}

\date{June 19, 2009}

\thanks{This reseach was partially supported by GNSAGA of INdAM (Italy)
and by the NSA. We thank L. Avramov, M. Chardin, L. Chiantini,
L. Ein, C. Huneke, S. Iyengar, 
D. Katz, J. Migliore, T. Marley and Z. Teitler for helpful comments.}

\begin{abstract} We develop tools to study the problem of containment
of symbolic powers $I^{(m)}$ in powers $I^r$ for a homogeneous
ideal $I$ in a polynomial ring 
$k[{\bf P}^N]$ in $N+1$ variables over an arbitrary algebraically closed 
field $k$. We obtain results on the structure of the set of pairs 
$(r,m)$ such that $I^{(m)}\subseteq I^r$. As corollaries,
we show that $I^2$ contains $I^{(3)}$
whenever $S$ is a finite generic set of points in ${\bf P}^2$
(thereby giving a partial answer to a question of Huneke), and
we show that the containment theorems of \cite{refELS} and \cite{refHH1}
are optimal for every fixed dimension and codimension. 
\end{abstract}

\subjclass{Primary: 14C20; Secondary: 13F20, 14N05, 14H20, 41A05}
\keywords{fat points, Seshadri constants, symbolic powers}

\maketitle



\section{Introduction}\label{intro}

Consider a homogeneous ideal $I$ in a polynomial
ring $k[{\bf P}^N]$. Taking powers of $I$ is a natural 
algebraic construction, but it can be
difficult to understand their structure
geometrically (for example, knowing generators
of $I^r$ does not make it easy to know its
primary decomposition). On the other hand,
symbolic powers of $I$ are more natural geometrically
than algebraically. For example, if $I$ is a radical ideal
defining a finite set of points $p_1,\ldots,p_s\in{\bf P}^N$, then
its $m$th symbolic power $I^{(m)}$ is generated by all
forms vanishing to order at least $m$ at each point $p_i$,
but it is not easy to write down specific generators
for $I^{(m)}$, even if one has generators for $I$.

Thus it is of interest to compare the two constructions,
and a good deal of work has been done recently comparing 
powers of ideals with symbolic powers in various ways.
See for example, \cite{refHo}, \cite{refS}, \cite{refK}, \cite{refELS}, \cite{refHH1},
\cite{refCHHT} and \cite{refLS}.
Here we ask when a power of $I$ contains
a symbolic power, or vice versa. The second question
has an easy answer: if $I$ is nontrivial (i.e., not $(0)$ or $(1)$),
then $I^r\subseteq I^{(m)}$ if and only if $m\le r$ \cite[Lemma 8.1.4]{refPSC}.
Thus here we focus on the first question, 
and for that question all that it is easy to say 
is that if $I$ is nontrivial and $I^{(m)}\subseteq I^r$, then $m\ge r$ (Lemma \ref{GGPlem}(a)).
The problem of precisely for which $m\ge r$ we have 
$I^{(m)}\subseteq I^r$ is largely open. 

As a stepping stone, we introduce an asymptotic
quantity which we refer to as 
the {\it resurgence}, namely 
$\rho(I)=\hbox{sup}\{m/r : I^{(m)}\not\subseteq I^r\}$.
In particular, if $m > \rho(I)r$, then one is guaranteed
that $I^{(m)}\subseteq I^r$. Until recently it would not have been clear
that the sup always exists, but results of \cite{refS}
imply, for radical ideals at least, that it does, 
and \cite{refHH1}, generalizing the result of \cite{refELS},
shows in fact that $\rho(I)\le N$
and hence for a nontrivial homogeneous ideal $I$ we have $1\le \rho(I)\le N$ (see Lemma \ref{postcrit1}(b)).

There are still, however, very few cases for which the actual value of
$\rho(I)$ is known, and they are almost all cases for which
$\rho(I)=1$. For example, by Macaulay's unmixedness theorem
it follows that $\rho(I)=1$ when $I$ is a complete intersection
(also see \cite{refHo} and \cite{refLS}). And if $I$ is a monomial ideal, 
it is sometimes possible to compute $\rho(I)$ directly; for example,
if $I$ defines three noncollinear points in ${\bf P}^2$, then
one can show $\rho(I)=4/3$ (see \cite{refBH2}).

In this paper we give the first results
regarding the structure of the set of pairs
$(r,m)$ for which $I^{(m)}\subseteq I^r$.
These results are in terms of numerical invariants
of $I$. In particular, let $\alpha(I)$ be the 
least degree of a generator in any set of homogeneous generators of $I$,
let $\omega(I)$ be the least degree $t$ such that $I$
is generated by forms of degree $t$ and less,
and let ${\rm reg}(I)$ be the regularity of $I$.
We also define an invariant $\gamma(I)$,
which is like a Seshadri constant.
We then obtain the following structural results.
If $m/r\le \alpha(I)/\gamma(I)$, we prove that
$I^{(mt)}\not\subseteq I^{rt}$ for all $t\gg0$ (Lemma \ref{postcrit1}).
If in addition $I$ defines a zero dimensional subscheme,
then we show $m/r\ge {\rm reg}(I)/\gamma(I)$ implies that
$I^{(m)}\subseteq I^r$ (Corollary \ref{PCcor}), and we show that
$m/r > \omega(I)/\gamma(I)$ implies that
$I^{(mt)}\subseteq I^{rt}$ for all $t\gg0$ (Corollary \ref{asympcor}).
From these results it follows that $\alpha(I)/\gamma(I)\le \rho(I)$, and,
when $I$ defines a zero-dimensional subscheme of ${\bf P}^N$, 
that $\rho(I)\le {\rm reg}(I)/\gamma(I)$ (see Theorem \ref{SCthm}). 

By applying these results we give the first
determinations of $\rho(I)$ in cases for which $\rho(I)>1$
and $I$ is not monomial (see Theorem \ref{skeletonThm}(a) 
and Proposition \ref{coneprop}(a)). 
As a corollary, it follows that the upper bounds
on $\rho$ coming from \cite{refELS} and \cite{refHH1} are sharp
(see Corollary \ref{LESHHmaxThm}).

Our original motivation for this work was a question 
of Huneke's which is still open: if $I=I(S)$ is the ideal defining
any finite set $S$ of points in ${\bf P}^2$, is it true that $I^{(3)}\subseteq I^2$? This 
question was prompted by the results of \cite{refHH1} and \cite{refELS},
which guarantee that $I^{(4)}\subseteq I^2$. The question of 
the containment $I^{(3)}\subseteq I^2$ turns out to be quite delicate.
Here we show that containment holds at least when $S$ is a set
of generic points (Theorem \ref{8}).

\subsection{Comparison Invariants}

As mentioned above, given any homogeneous ideal $0\ne I\subsetneq R=k[{\bf P}^N]$,
we define the {\it resurgence}, $\rho(I)$, of $I$ to be the supremum of all ratios $m/r$ such  that
$I^r$ does not contain $I^{(m)}$, where by
$I^{(m)}$ we mean, as in \cite{refHH1}, the contraction of $I^mR_A$ to $R$, 
where $R_A$ is the localization of $R$ by the multiplicative system $A$, and $A$ is
the complement of the union of the 
associated primes of $I$.
We refer to the maximum height among the associated primes of $I$
as the codimension, $\hbox{cod}(I)$, of $I$.

The saturation $\hbox{sat}(I)$ of a homogeneous ideal  $I$ is 
the ideal generated by all forms $F$ such that
$(x_0,\ldots,x_N)^tF\subseteq I$ for some $t$ sufficiently large. 
If $I=\hbox{sat}(I)$, we say
$I$ is saturated. In any case, there is always a $t$ such that 
$I_j=\hbox{sat}(I)_j$ for all $j\ge t$. The least such $t$ is 
the saturation degree, $\hbox{satdeg}(I)$, of $I$.

In case $I$ is saturated and thus we have $I=I(X)$ for a subscheme
$X\subseteq {\bf P}^N$, we may write $\rho(X)$ to mean
$\rho(I)$.
A case of particular interest to us here is when $I(X)$ 
is an intersection $I=I(X)=\cap_i I(L_i)^{m_i}$
of powers of ideals of linear subspaces $L_i\subseteq{\bf P}^N$,
none of which contains another, in which case we refer to $X$ 
as a {\it fat flat} subscheme. Taking symbolic powers of $I(X)$ is then
straightforward; since $I(L_i)^{m_i}$ is primary, $I^{(m)}=\cap_i I(L_i)^{mm_i}$.
A special case of particular importance is when each 
space $L_i$ is a single reduced point $p_i$. In this case $X$ is known as
a {\it fat point subscheme} and $I^{(m)}$ is just 
the saturation of $I^m$.

Now let $\rho(N,d)$ denote the supremum of $\rho(I)$ over 
homogeneous ideals $0\ne I\subsetneq k[{\bf P}^N]$ of codimension $d$. 
(Since $d=N$ is an important special
case, will just write $\rho(N)$ for $\rho(N,N)$.) The main theorem of
\cite{refHH1} implies that $\rho(N,d)\le d$.
Here we show that in fact $\rho(N,d)=d$.

Although, as far as we know, it has not previously been shown for any $N$ or $d>1$
that $\rho(N,d)=d$, examples of Ein (see Section \ref{Optsubsect} and \cite{refHH2}) 
show that $\hbox{lim}_{N\to\infty}\rho(N,d)=d$.
We obtain:

\begin{cor}\label{LESHHmaxThm}
For each $N\ge 1$ and $1\le d\le N$, we have $\rho(N,d)=d$.
\end{cor}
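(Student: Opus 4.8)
The plan is to prove the inequalities $\rho(N,d)\le d$ and $\rho(N,d)\ge d$ separately. The first is already in hand: the main theorem of \cite{refHH1} gives $\rho(I)\le\operatorname{cod}(I)$ for every nontrivial homogeneous $I$, which is precisely what was used above to record $\rho(N,d)\le d$. So the entire content is the lower bound, and since $\rho(N,d)$ is a supremum it is enough to produce, for each $\varepsilon>0$, one homogeneous ideal $I\subsetneq k[{\bf P}^N]$ of codimension $d$ with $\rho(I)>d-\varepsilon$. The case $d=1$ is immediate, since then $1\le\rho(I)\le1$ for every such $I$ by the upper bound together with Lemma \ref{postcrit1}(b); so assume $d\ge2$.

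Concretely, for a fixed integer $s>d$ take general hyperplanes $H_1,\dots,H_s\subseteq{\bf P}^d$ and let $X_s\subseteq{\bf P}^d$ be the reduced set of $\binom{s}{d}$ points $L_J=\bigcap_{j\in J}H_j$, with $J$ ranging over the $d$-element subsets of $\{1,\dots,s\}$; this is the generic star configuration of Theorem \ref{skeletonThm}(a), and $I_s=I(X_s)$ has codimension $d$ with $I_s^{(m)}=\bigcap_J I(L_J)^m$. To pass from ${\bf P}^d$ to ${\bf P}^N$ for arbitrary $N\ge d$ I would invoke Proposition \ref{coneprop}(a): iterated coning sends a codimension-$d$ subscheme of ${\bf P}^d$ to a codimension-$d$ subscheme of ${\bf P}^N$ with the same resurgence. (Alternatively one runs the same argument with $\binom{s}{d}$ general codimension-$d$ linear subspaces directly in ${\bf P}^N$.) So everything reduces to estimating $\rho(I_s)$ in ${\bf P}^d$.

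Here I would assemble two facts. First, $\alpha(I_s)=s-d+1$: a product of any $s-d+1$ of the $H_j$ vanishes at every point $L_J$, because a $d$-element subset and an $(s-d+1)$-element subset of $\{1,\dots,s\}$ must meet, and minimality of this degree uses the general position of the $H_j$. Second, the product $H_1\cdots H_s$ has degree $s$ and vanishes to order exactly $d$ at each $L_J$, hence lies in $I_s^{(d)}$; this forces $\gamma(I_s)\le s/d$, and Theorem \ref{skeletonThm}(a) supplies the matching lower bound $\gamma(I_s)\ge s/d$, so $\gamma(I_s)=s/d$. Substituting into the inequality $\alpha(I)/\gamma(I)\le\rho(I)$ of Lemma \ref{postcrit1} gives
\[
\rho(I_s)\ \ge\ \frac{\alpha(I_s)}{\gamma(I_s)}\ =\ \frac{s-d+1}{s/d}\ =\ d\Bigl(1-\frac{d-1}{s}\Bigr),
\]
whose right-hand side exceeds $d-\varepsilon$ once $s$ is large enough. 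Combined with $\rho(I_s)\le d$, this gives $\rho(N,d)=d$.

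The step I expect to be the crux is the exact value of the Seshadri-type constant $\gamma(I_s)$, i.e.\ the lower bound $\gamma(I_s)\ge s/d$: one must show that the evident form $H_1\cdots H_s$ is asymptotically the most efficient way for a hypersurface to vanish to high order at the points of the star configuration, equivalently that no symbolic power $I_s^{(m)}$ contains an unexpectedly low-degree form. This is where the geometry of general hyperplane arrangements enters; the special (arithmetically Cohen--Macaulay) structure of star configurations should make it tractable, unlike the analogous question for general points, which is governed by open conjectures. Everything else --- the combinatorial value of $\alpha$, the elementary estimate $\gamma(I_s)\le s/d$, the cone bookkeeping, and the passage to the limit $s\to\infty$ --- is routine.
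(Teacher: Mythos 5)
Your proposal is correct and essentially coincides with the paper's own argument: the upper bound from \cite{refHH1} together with the star configuration of points $S_d(d,s)$ (Theorem \ref{skeletonThm}(a)), iterated coning via Proposition \ref{coneprop}(a), and letting $s\to\infty$ --- and your parenthetical variant (codimension-$d$ intersections taken directly in ${\bf P}^N$) is precisely the paper's primary route via Theorem \ref{skeletonThm}(b). One small remark: for the lower bound on $\rho$ you only need $\gamma(I_s)\le s/d$ (the easy product-of-hyperplanes estimate) together with $\alpha(I_s)\ge s-d+1$ (Lemma \ref{regLem}), so the step you single out as the crux, $\gamma(I_s)\ge s/d$, is not actually required here, although the paper does establish it in Lemma \ref{alphaLemma}.
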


Our proof of Corollary \ref{LESHHmaxThm} involves finding, for each $N$ and $d$,
a sequence of subschemes $S_N(d,i)\subsetneq {\bf P}^N$ such that 
$\lim_{i\to \infty}\rho(S_N(d,i))=d$. These subschemes 
can be taken to be fat flat subschemes, and, in fact, reduced.

Our main technical tool involves developing bounds, as discussed above, on $\rho(Z)$
for subchemes $Z\subsetneq {\bf P}^N$,
mostly in terms of postulational invariants of $I(Z)$; i.e., invariants
that are determined by the Hilbert functions of $I(Z)^{(m)}$.
Thus these bounds are the same
for any $Z$ for which the Hilbert functions of $I(Z)$ and its symbolic powers
remain the same. This is useful since postulational data is reasonably accessible,
either computationally or theoretically
(for example, \cite{refGuardoHar} and \cite{refGHM} classify all sets of up to 8 points
in ${\bf P}^2$ according to the postulational data of fat point subschemes
supported at the points).

\subsection{Postulational Bounds and Seshadri Constants}

We now discuss in detail the postulational invariants we will use.
Given a homogeneous ideal $0\ne I\subseteq R=k[{\bf P}^N]$, let 
$\alpha(I)$ be the least degree $t$ such that the homogeneous
component $I_t$ in degree $t$ is not zero. Thus $\alpha$ is, so to speak, the degree
in which the ideal begins. It is also the degree of a generator of least degree,
and it is the $M$-adic order of $I$ (i.e., the largest $t$ such that
$I\subseteq M^t$), where $M$ is the maximal homogeneous ideal.
If $Z\subseteq {\bf P}^{N-1}\subsetneq {\bf P}^N$ is a subscheme
contained in a hyperplane, in cases which are not clear from context
we will use 
$\alpha_{N-1}(I(Z))$ or $\alpha_N(I(Z))$ to distinguish
whether we are considering $\alpha$ for the ideal defining $Z$ in
${\bf P}^{N-1}$ or in ${\bf P}^N$.
Let $\tau(I)$ be the least degree such that
the Hilbert function  becomes equal to the Hilbert polynomial of $I$
and let $\sigma(I)=\tau(I)+1$.

Given a minimal free resolution $0\to F_N\to\cdots\to F_0\to I\to0$
of $I$ over $R$, where $F_i$ as a graded $R$-module is
$\oplus R[-b_{ij}]$,  the {\it Castelnuovo-Mumford regularity} ${\rm reg}(I)$
of $I$ is the maximum over all $i$ and $j$ of $b_{ij}-i$.
If $I$ defines a 0-dimensional subscheme of ${\bf P}^N$
(i.e., $I$ has codimension $N$),
then ${\rm reg}(I)$ is the maximum
of $\hbox{satdeg}(I)$ and $\sigma(\hbox{sat}(I))$, hence if $I$ is already
saturated (and so is the ideal of a 0-dimensional subscheme), then 
${\rm reg}(I)=\sigma(I)$ (see \cite{refGGP}).
(We will only be concerned with the regularity in case $I$ defines a 
0-dimensional subscheme.)

Our results depend on our developing bounds on $\rho(I)$.
Our bounds involve the quantity
$\gamma(I)=\hbox{lim}_{m\to \infty}\alpha(I^{(m)})/m$
for a homogeneous ideal $0\ne I\subsetneq k[{\bf P}^N]$.
Because of the subadditivity of $\alpha$,
this limit exists (see Remark III.7 of \cite{refHR2} or Lemma \ref{subadd}). 
Moreover, $\gamma(I)>0$ (see Lemma \ref{postcrit1}).
Given a subscheme $Z\subsetneq {\bf P}^N$, 
we will write $\gamma(Z)$ for $\gamma(I(Z))$.
Since $\alpha(I^m)$ is linear in $m$,
note that $\alpha(I)/\gamma(I)=\hbox{lim}_{m\to \infty} \alpha(I^m)/\alpha(I^{(m)})$.
Thus $\alpha(I)/\gamma(I)$ gives an asymptotic measure of the growth
of $I^{(m)}$ compared to $I^m$. 

Our next result thus shows that $\rho(I)$ measures additional growth,
in comparison to $\alpha(I)/\gamma(I)$
(hence the term resurgence for $\rho$).

\begin{thm}\label{SCthm}
Let $0\ne I\subsetneq k[{\bf P}^N]$ be a homogeneous ideal.
\begin{itemize}
\item[(a)] Then ${\alpha(I)/\gamma(I)}\le\rho(I)$.
\item[(b)] If in addition $I$ defines a 0-dimensional subscheme,
then $\rho(I)\le {\rm reg}(I)/\gamma(I)$.
\end{itemize}
\end{thm}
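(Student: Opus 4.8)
The plan is to prove both inequalities by the same mechanism: relating the degree of a form that vanishes to high order along $Z$ (or, algebraically, lies in a high symbolic power) to membership in an ordinary power of $I$. For part (a), I would argue by contradiction. Suppose $\rho(I) < \alpha(I)/\gamma(I)$; pick a rational number $m/r$ strictly between $\rho(I)$ and $\alpha(I)/\gamma(I)$, so that (since $m/r>\rho(I)$) we have $I^{(mt)}\subseteq I^{rt}$ for every positive integer $t$ (replacing $(m,r)$ by $(mt,rt)$ does not change the ratio, and $I^{(mt)}\subseteq I^{rt}$ either holds for all such $t$ or the ratio would be $\le\rho(I)$ — here one uses that $I^{(ab)}\subseteq (I^{(a)})^{(b)}$ or more simply that the resurgence is a supremum over all pairs). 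The containment $I^{(mt)}\subseteq I^{rt}$ forces $\alpha(I^{(mt)})\ge \alpha(I^{rt}) = rt\,\alpha(I)$, since $\alpha$ is super-additive on ordinary powers with $\alpha(I^k)=k\,\alpha(I)$. Dividing by $mt$ and letting $t\to\infty$ gives $\gamma(I)=\lim_{t\to\infty}\alpha(I^{(mt)})/(mt)\ge r\,\alpha(I)/m$, i.e. $m/r\ge \alpha(I)/\gamma(I)$, contradicting the choice of $m/r$. (I would also need the elementary fact that $\gamma(I)>0$ and $\alpha(I)\ge 1$, both available from Lemma \ref{postcrit1} and the non-triviality of $I$, so the ratios are well-defined positive reals.)

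For part (b), I would again argue by contradiction, now showing that if $m/r\ge {\rm reg}(I)/\gamma(I)$ then $I^{(m)}\subseteq I^r$, which would force $\rho(I)\le {\rm reg}(I)/\gamma(I)$. The key input is the regularity-based containment criterion for zero-dimensional schemes: if $F\in I^{(m)}$ is a form of degree $\deg F\ge r\,{\rm reg}(I)$ — more precisely if every generator of $I^{(m)}$ has degree at least $r\,{\rm reg}(I)$ — then $F\in I^r$. This should follow because $I^r$ and $I^{(m)}$ agree with each other in high enough degree (they have the same radical and, in the zero-dimensional case, the saturation of $I^r$ is $I^{(r)}\supseteq$ nothing useful directly — instead one uses that $(I^r)_t = (I^{(r)})_t$ for $t\ge r\,{\rm reg}(I)$ by a standard regularity-of-powers estimate, together with $I^{(m)}\subseteq I^{(r)}$ since $m\ge r$). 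So the core step is: the least degree of a generator of $I^{(m)}$ is $\alpha(I^{(m)})$, and I want $\alpha(I^{(m)})\ge r\,{\rm reg}(I)$. This is where $\gamma$ enters: since $\alpha(I^{(m)})/m \ge \gamma(I)$ for every $m$ (the sequence $\alpha(I^{(m)})/m$ decreases to its infimum $\gamma(I)$, by subadditivity — Lemma \ref{subadd}), we get $\alpha(I^{(m)})\ge m\,\gamma(I)\ge r\,{\rm reg}(I)$ exactly when $m/r\ge {\rm reg}(I)/\gamma(I)$. Chaining these gives $I^{(m)}\subseteq I^r$.

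The main obstacle I anticipate is justifying the degree bound in part (b): namely that a form of degree at least $r\,{\rm reg}(I)$ lying in the symbolic power $I^{(m)}$ (with $m\ge r$) actually lies in $I^r$. Knowing $F\in I^{(r)}$ is not by itself enough — one needs that above degree $r\,{\rm reg}(I)$ the ideals $I^r$ and $I^{(r)}$ coincide. For a zero-dimensional scheme $I^{(r)}$ is the saturation of $I^r$, so this amounts to bounding ${\rm satdeg}(I^r)$ by $r\,{\rm reg}(I)$, which is a consequence of the general fact ${\rm reg}(I^r)\le r\,{\rm reg}(I)$ for ideals generated in a single degree, or more carefully of the linear behavior of regularity of powers; I would either cite this or prove the needed special case directly using the minimal free resolution and the definition of regularity given above. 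The comparison of $\alpha(I^{(m)})$ with $m\gamma(I)$, by contrast, is immediate from subadditivity of $\alpha$ and the definition of $\gamma$, so that part is routine. I would also want to double-check the edge behavior when $m/r$ equals the boundary ratio exactly, to confirm the inequalities are not strict.
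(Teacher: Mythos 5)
Your proposal is correct and takes essentially the same route as the paper: your part (a) is just the contrapositive of Lemma \ref{postcrit1}(b) (compare $\alpha(I^{(mt)})$ with $\alpha(I^{rt})=rt\,\alpha(I)$ and let $t\to\infty$), and your part (b) reassembles Postulational Criterion 2 (Lemma \ref{postcrit2}) together with the inequality $\alpha(I^{(m)})\ge m\,\gamma(I)$ of Remark \ref{SCrem}. One caution on the step you flag as the main obstacle: the identity $(I^r)_t=(I^{(r)})_t$ for $t\ge r\,{\rm reg}(I)$ should be justified by ${\rm reg}(I^r)\le r\,{\rm reg}(I)$ in the zero-dimensional case (Theorem 1.1 of \cite{refGGP}, as used in Lemma \ref{GGPlem}(c)), since the inequality ${\rm reg}(I^r)\le r\,{\rm reg}(I)$ is false for arbitrary homogeneous ideals and the asymptotic linearity of ${\rm reg}(I^r)$ does not suffice, the bound being needed for every $r\ge 1$. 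Also, $\alpha(I^{(m)})/m$ need not be a monotone sequence, but its infimum equals $\gamma(I)$ by subadditivity (as in Remark \ref{SCrem}), which is all your argument actually requires.
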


Thus, for example, given $I=I(Z)$ for a fat point subscheme $Z$ with $\alpha(I)=\sigma(I)$, this theorem shows that
computing $\rho(Z)$ is equivalent to computing $\gamma(Z)$.
The quantity $\gamma$ is in that case essentially  a uniform version of a multi-point Seshadri constant.
Indeed, if $Z$ is a reduced finite generic set of $n$ points in ${\bf P}^N$, then $\gamma(Z)=
n(\varepsilon(N,Z))^{N-1}$ (see Lemma \ref{subadd}), where,
following the exposition of \cite{refHR2, refHR3}, $\varepsilon(N,Z)$ is
the codimension 1 multipoint
Seshadri constant for $Z=\{p_1, \dots, p_n\}$; i.e., the real number
$$\varepsilon(N,Z)=\root{N-1}\of {\hbox{inf}
\left\{\frac{\hbox{deg}(H)}{\Sigma_{i=1}^n \hbox{mult}_{p_i}H}\right\}},$$ 
where the infimum is taken with respect to 
all hypersurfaces $H$, through at least one of the points (see \cite{refD}
and \cite{refXb}). We also define
$\varepsilon(N,n)$ to be ${\hbox{sup}\{\varepsilon(N,Z)\}},$ where
the supremum is taken with respect to 
all choices $Z$ consisting of $n$ distinct points $p_i$ of ${\bf P}^{N}$. 
In case $N$ is clear from context, we will write $\varepsilon(Z)$
for $\varepsilon(N,Z)$.

While it is in any case obvious from the definitions that $\gamma(Z)\ge
n(\varepsilon(N,Z))^{N-1}$, equality can fail since the latter takes notice of
hypersurfaces whose multiplicities at the points $p_i$ 
need not all be the same. (For example, if $Z$ is the reduced scheme
consisting of $n=4$ points in ${\bf P}^2$, 3 of them on a line and one off,
then  $5/3=\gamma(Z) >n\varepsilon(2,Z)=4/3$.) 

\subsection{Application to generic points}

As an interesting example, consider
${\bf P}^N$ and some $s$, and let $I$ be the ideal of
$n=\binom{s+N}{N}$ generic points of ${\bf P}^N$; then in
Theorem \ref{SCthm} we have $\alpha(I)=s+1=\sigma(I)={\rm reg}(I)$.
Although, in the case of $N=2$, $\varepsilon(2,n)$ (and hence $\gamma(I)$) is known for $n<10$,
a famous and still open conjecture of Nagata \cite{refNag} is equivalent 
to asserting that $\varepsilon(2,n)=1/\sqrt{n}$ for $n\ge 10$.
For no nonsquare $n\ge10$ is $\varepsilon(2,n)$ currently known.
However, it is not hard to show that $\varepsilon(2,n)=1/\sqrt{n}$ if 
$n$ is any square. Thus we have the following corollary.

\begin{cor}\label{SCcor}
If $n=\binom{s+N}{N}$, then
for the subscheme $Z\subset {\bf P}^N$ consisting of the union of
$n$ distinct generic points we have $\rho(Z)=\frac{s+1}{n(\varepsilon(N,Z))^{N-1}}$.
If in addition $N=2$ and $n$ is a square, then
$$\rho(Z)={\frac{s+1}{\sqrt{n}}}=\sqrt{2}\sqrt{\frac{s+1}{s+2}}.$$
\end{cor}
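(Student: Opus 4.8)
The plan is to apply Theorem \ref{SCthm} directly, using the postulational data recorded just above the statement. For the subscheme $Z$ of $n=\binom{s+N}{N}$ generic points, the expected-dimension count gives $\alpha(I(Z))=s+1$, and since generic points impose independent conditions in every degree one has $\sigma(I(Z))={\rm reg}(I(Z))=s+1$ as well. Hence in Theorem \ref{SCthm} the lower bound $\alpha(I)/\gamma(I)$ and the upper bound ${\rm reg}(I)/\gamma(I)$ coincide, both equal to $(s+1)/\gamma(Z)$, forcing $\rho(Z)=(s+1)/\gamma(Z)$. Then I would invoke the identity $\gamma(Z)=n(\varepsilon(N,Z))^{N-1}$ for a generic reduced set of $n$ points (Lemma \ref{subadd}) to rewrite this as $\rho(Z)=\frac{s+1}{n(\varepsilon(N,Z))^{N-1}}$, which is the first assertion.

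For the second assertion I would specialize to $N=2$ and $n$ a perfect square. Here $\gamma(Z)=n\,\varepsilon(2,Z)$, so I need $\varepsilon(2,Z)=1/\sqrt n$ for $n$ generic points when $n$ is a square. The standard argument: the inequality $\varepsilon(2,Z)\le 1/\sqrt n$ holds for any $n$ points (take $H$ of degree $\sqrt n$ passing through each point once — or more robustly, use that a general plane curve of degree $t$ through the $n$ points with equal multiplicity $m$ exists once $t$ is a multiple of $\sqrt n$ matching $m$, via the fact that $\sqrt n$ general points with multiplicity $1$ lie on a curve of degree $\sqrt n - 1 + 1$... I will phrase this through Nagata's bound). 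For the reverse inequality $\varepsilon(2,Z)\ge 1/\sqrt n$, since $n=k^2$ is a square one uses the elementary specialization/degeneration argument (placing the points on a $k\times k$ grid, or a one-parameter degeneration to such a configuration) to produce, for generic points, curves of degree $km$ through the $n$ points each to multiplicity $m$, giving $\mathrm{deg}(H)/\sum_i \mathrm{mult}_{p_i}H = km/(k^2 m)=1/k$; this is exactly the classical fact alluded to in the text ("it is not hard to show that $\varepsilon(2,n)=1/\sqrt n$ if $n$ is any square"), which I would either cite or reproduce in a line. Substituting $\gamma(Z)=n/\sqrt n=\sqrt n$ yields $\rho(Z)=(s+1)/\sqrt n$.

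Finally, for the closed-form $\sqrt 2\sqrt{(s+1)/(s+2)}$, I would simply substitute $n=\binom{s+2}{2}=\frac{(s+1)(s+2)}{2}$, so $\sqrt n=\sqrt{(s+1)(s+2)/2}$ and
$$\rho(Z)=\frac{s+1}{\sqrt{(s+1)(s+2)/2}}=\sqrt{\frac{2(s+1)^2}{(s+1)(s+2)}}=\sqrt 2\sqrt{\frac{s+1}{s+2}},$$
completing the proof.

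The only genuinely nontrivial input is the evaluation $\varepsilon(2,Z)=1/\sqrt n$ for generic $n=k^2$; everything else is bookkeeping with Theorem \ref{SCthm} and an algebraic simplification. So the main obstacle — really the one fact needing care — is verifying that square-many generic points in $\mathbf{P}^2$ achieve the Nagata bound, i.e., that the grid configuration computing the Seshadri constant is not worsened by passing to generic position; I expect to handle this via semicontinuity of $\alpha(I^{(m)})$ (generic points minimize it) together with the explicit grid curves of degree $km$, which give the matching upper bound on $\gamma$.
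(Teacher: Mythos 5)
Your overall reduction is exactly the paper's: for $n=\binom{s+N}{N}$ generic points one has $\alpha(I)=\sigma(I)={\rm reg}(I)=s+1$, so the two bounds of Theorem \ref{SCthm} coincide and give $\rho(Z)=(s+1)/\gamma(Z)$; Lemma \ref{subadd} converts $\gamma(Z)$ into $n(\varepsilon(N,Z))^{N-1}$; and the closing simplification from $n=\frac{(s+1)(s+2)}{2}$ is the same. The paper disposes of the remaining input, $\varepsilon(2,Z)=1/\sqrt{n}$ for square $n$, by quoting it as a classical fact, so your option of simply citing it would be acceptable.

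However, the argument you sketch for that fact --- which you rightly single out as the only nontrivial step --- is backwards in two ways. First, exhibiting curves $H$ with $\deg(H)/\sum_i\hbox{mult}_{p_i}H=1/k$ can only bound the infimum defining $\varepsilon(2,Z)$ from \emph{above}; it addresses the easy inequality $\varepsilon(2,Z)\le 1/\sqrt{n}$ (which holds for arbitrary points by the asymptotic conditions count $\binom{d+2}{2}>n\binom{m+1}{2}$ with $d\sim m\sqrt{n}$), not the reverse inequality $\varepsilon(2,Z)\ge 1/\sqrt{n}$ that you attach it to. Second, your semicontinuity claim is reversed: specializing the points can only enlarge the linear systems, so generic points \emph{maximize} $\alpha(I^{(m)})$ rather than minimize it. Consequently the grid curves of degree $km$ cannot be transported from the grid to generic points; indeed for $n=k^2\ge 16$ generic points no curve of degree $km$ with multiplicity $m$ at every point exists (the virtual dimension $\binom{km+2}{2}-k^2\binom{m+1}{2}$ is negative), and likewise no curve of degree $k$ through the $k^2$ points when $k\ge 4$. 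The correct elementary use of degeneration runs the other way: specialize the $k^2$ points to the grid of pairwise intersections of two sets of $k$ lines (or onto a smooth curve of degree $k$), use B\'ezout to show $\alpha(I^{(m)})\ge km$ for that configuration, and conclude $\alpha(I^{(m)})\ge km$, hence $\gamma(Z)\ge\sqrt{n}$, for generic points by semicontinuity; the matching bound $\gamma(Z)\le\sqrt{n}$ then comes from the dimension count, not from explicit curves. With the two inequalities and the direction of semicontinuity straightened out, your plan goes through and coincides with the paper's proof.
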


We remark that there are infinitely many integers $n$ which are 
at the same time a square and of the form $\binom{s+2}{2}$.
(An easy argument shows that $n=\binom{s+2}{2}$ is a square if and only if
either $s+1=2x^2$ for some $y$ such that $y^2-2x^2=1$,
or $s+2=2x^2$ for some $y$ such that $y^2-2x^2=-1$. The fact that there are
infinitely many such $x$ follows from the theory of Pell's equation.
The first few $s$ that arise are 0, 7, 48, 287, 1680, 9799, etc.)

\section{Preliminaries}\label{prelims}

In this section we establish our postulational criteria
for containment. We use two basic but surprisingly powerful
ideas.

\subsection{The Containment Principles}

The first idea, given homogeneous
ideals $I$ and $J$ in $k[{\bf P}^N]$, is that by examining
the zero loci of $I_t$ and $J_t$ (called $t$ degree envelopes in \cite{refTe}) 
we get a necessary criterion for
containment. In particular, if $I\subseteq J$, then the zero locus
of $I_t$ must contain the zero locus of $J_t$ in every degree $t$.
This is useful when trying to show that containment fails.

The second idea uses the obvious fact
that $I^{(m)}\subseteq I^{(r)}$ if $r\le m$, and the fact (when $I$ defines a
0-dimensional subscheme) that
$(I^{(r)})_t=(I^r)_t$ for $t$ large enough. Given $r$, 
if we pick $m\ge r$ large enough, then $\alpha(I^{(m)})$
will be large enough so that  $(I^{(r)})_t=(I^r)_t$ for all $t\ge \alpha(I^{(m)})$,
and hence $(I^{(m)})_t\subseteq (I^{(r)})_t=(I^r)_t$ for $t\ge \alpha(I^{(m)})$.
Since $(I^{(m)})_t=(0)\subseteq (I^r)_t$ for $t<\alpha(I^{(m)})$, we obtain
$I^{(m)}\subseteq I^{r}$.

Given a homogeneous ideal $J\subseteq k[{\bf P}^N]$,
let $h_J(t) = \dim J_t$ denote its Hilbert function. 
Let $P_J$ denote the Hilbert polynomial. Thus $\alpha(J)$, 
defined when $J\ne 0$,
is the least $t\ge0$ such that $h_J(t)>0$, and $\tau(J)$
is the least $t$ such that $h_J(t)=P_J(t)$.

\subsection{Some Notation for Fat Flats}

We now recall a convenient notation for denoting fat flats.
Let $I\subseteq k[{\bf P}^N]$ be any ideal of the form
$I=\cap_iI(L_i)^{m_i}$, where each $L_i\subsetneq {\bf P}^N$ is a proper linear subspace,
with no $L_i$ containing $L_j$, $j\ne i$, and where each $m_i$ is a nonnegative integer.
The fat flat subscheme $Z$ defined by $I$ depends only on
the spaces $L_i$, the integers $m_i$ and the space
${\bf P}^N$ containing $Z$. Since the latter is usually clear from context, it is 
convenient to denote the subscheme formally by $Z=m_1L_1+\cdots+m_nL_n$
and write $I=I(Z)$ for the defining ideal. In particular, 
$I(mZ)=I(Z)^{(m)}$ for each positive integer $m$.

Given a fat flat subscheme $Z=m_1L_1+\cdots+m_nL_n\subsetneq {\bf P}^N$,
the set $\hbox{Supp}(Z)=\{L_i : m_i>0\}$ is called the {\it support\/} of $Z$. In case
$Z$ is a fat point subscheme, 
we denote the sum $\sum_i \binom{m_i+N-1}{N}$ by $\hbox{deg}(Z)$;
as is well known, $P_{I(Z)}(t) = \binom{t+N}{N} - \hbox{deg}(Z)$.
It is easy to see that $\hbox{deg}(rZ)$ is a strictly increasing function of $r$.

\subsection{Preliminary Lemmas}

We begin by considering $\gamma(I)$.

\begin{lem}\label{subadd} For any homogeneous ideal $0\ne I\subseteq k[{\bf P}^N]$,
the limit 
$$\gamma(I)=\lim_{m\to\infty}\alpha(I^{(m)})/m$$ 
exists. Moreover, if $I=I(Z)$, where $Z$ is the reduced subscheme $Z\subsetneq {\bf P}^N$
consisting of a finite generic set of $n$ points, we have $\gamma(Z) = 
n(\varepsilon(N,Z))^{N-1}$.
\end{lem}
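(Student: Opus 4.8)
The plan is to prove the two assertions separately, since they are of quite different natures. For the existence of the limit, the key point is that $\alpha$ is subadditive on symbolic powers: for any homogeneous ideals one has $I^{(m)}I^{(m')}\subseteq I^{(m+m')}$, hence $\alpha(I^{(m+m')})\le \alpha(I^{(m)})+\alpha(I^{(m')})$. By Fekete's subadditivity lemma, the sequence $\alpha(I^{(m)})/m$ then converges to its infimum $\inf_m \alpha(I^{(m)})/m$, which is finite and nonnegative; this gives the first sentence. (I would note that the containment $I^{(m)}I^{(m')}\subseteq I^{(m+m')}$ is immediate from the description of $I^{(m)}$ as the contraction of $I^mR_A$, since the analogous containment $I^mI^{m'}\subseteq I^{m+m'}$ holds in $R_A$ and contraction is well-behaved with respect to products in this localized setting.)

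\textbf{The generic points formula.} For the second assertion, let $Z=\{p_1,\dots,p_n\}$ be a generic set of $n$ points in ${\bf P}^N$ and $I=I(Z)$. The inequality $\gamma(Z)\ge n(\varepsilon(N,Z))^{N-1}$ is, as the paper already remarks, obvious from the definitions: a form of degree $\alpha(I^{(m)})$ vanishes to order $\ge m$ at each $p_i$, so it is a hypersurface $H$ with $\deg(H)=\alpha(I^{(m)})$ and $\sum_i\mult_{p_i}H\ge mn$, whence $\deg(H)/\sum_i\mult_{p_i}H\le \alpha(I^{(m)})/(mn)$; taking $(N-1)$-st powers, passing to the infimum over $H$, and then letting $m\to\infty$ yields $(\varepsilon(N,Z))^{N-1}\le \gamma(Z)/n$. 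So the real content is the reverse inequality $\gamma(Z)\le n(\varepsilon(N,Z))^{N-1}$, and here is where genericity is essential: one must show that the asymptotically optimal hypersurfaces can be taken to have \emph{equal} multiplicity at all the points. The plan is to invoke the homogeneity/symmetry of the generic configuration. Given any hypersurface $H$ of degree $d$ with $\sum_i\mult_{p_i}H=e$, one can, for generic points, produce (via a limiting or specialization argument, or via the action permuting the points) for each large $t$ a hypersurface of degree $td$ vanishing to order $\ge \lceil te/n\rceil$ — or a suitable average — at \emph{every} $p_i$; more precisely, one symmetrizes: multiply together the $n!$ hypersurfaces obtained by relabeling the (generic, hence interchangeable) points, getting degree $n!\,d$ and multiplicity $\ge (n-1)!\,e$ at each point, so that $\alpha(I^{((n-1)!e)})\le n!\,d$, i.e. $\alpha(I^{((n-1)!e)})/((n-1)!e)\le n d/e$. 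Taking the infimum over all $(d,e)$ arising from hypersurfaces through at least one point gives $\gamma(Z)=\inf_m \alpha(I^{(m)})/m\le n\cdot\inf\{d/e\}= n\,(\varepsilon(N,Z))^{N-1}$.

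\textbf{Main obstacle.} The delicate point — the one I expect to require the most care — is the symmetrization step: turning an arbitrary hypersurface $H$ through some subset of the points into one with controlled degree and \emph{uniform} multiplicity at all $n$ points, valid for a genuinely generic configuration rather than a special symmetric one. The naive "product over all relabelings" argument sketched above does work, but one must be careful that relabeling a generic tuple gives another hypersurface of the same degree vanishing to the appropriate orders at the permuted points (which is automatic), and that the bookkeeping between $\lfloor\cdot\rfloor$, $\lceil\cdot\rceil$ and the limit $m\to\infty$ does not lose the constant $n$; passing to powers $H^t$ first and then symmetrizing, so that the rounding errors become negligible after dividing by $t$, handles this cleanly. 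I would also double-check that $\gamma(I)>0$ in this setting, which follows since $\varepsilon(N,Z)>0$ (a generic point has a positive Seshadri constant), consistent with the assertion in Lemma \ref{postcrit1}.
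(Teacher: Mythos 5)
Your proposal is correct and follows essentially the same route as the paper: subadditivity of $\alpha$ on symbolic powers plus Fekete's lemma for the existence of the limit (and the identification of $\gamma(I)$ with the infimum), and then, for generic points, a reduction to equal multiplicities. Your explicit symmetrization over the $S_n$-relabelings (valid for very general points by semicontinuity of the Hilbert function) is precisely the ``averaging over the points'' that the paper delegates to the proof of Corollary 5 of \cite{refR}.
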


\begin{proof} This is proved in Remark III.7 of \cite{refHR2}.
For the reader's convenience we recall the proof here.

First we show $\gamma(I)$ is defined.
Note that $\alpha$ is subadditive
(i.e., $\alpha(I^{(m_1+m_2)})\le \alpha(I^{(m_1)})+\alpha(I^{(m_2)})$,
and hence $\alpha(I^{(n)})/n \le (qm/n)\alpha(I^{(m)})/m+\alpha(I^{(r)})/n\le 
\alpha(I^{(m})/m+\alpha(I^{(r)})/n$ for any positive integers
$n=mq+r$, and $\alpha(I^{(n)})/n \le \alpha(I^{(m})/m$ if $r=0$. 
Thus $\alpha(I^{(n!)})/n!\le \alpha(I^{(m)})/m$ whenever $m$ divides $n!$.
Thus $\alpha(I^{(n!)})/n!$ is a non-increasing sequence, and 
hence has some limit $c$. In addition, for all $d\ge n!$, using integer division
to write $d=q(n!)+r$ with $0\le r< n!$, we have $\alpha(I^{(d)})/d \le 
\alpha(I^{(n!)})/(n!)+\alpha(I^{(r)})/d$. It follows that the limit exists and is equal to $c$.

For the second statement, argue as in the proof of Corollary 5 of \cite{refR} 
to reduce to the case that the multiplicities are all equal. This uses
the fact that the points are generic and thus one can, essentially, average over the points.
Now we see that $n(\varepsilon(N,Z))^{N-1}$ is, by definition, the infimum
of the sequence $\alpha(I^{(n)})/n$ whose limit defines $\gamma(I)$, but it is obvious from our 
argument above that $\gamma(I)\le \alpha(I^{(n)})/n$, and hence $\gamma(I)$ is the infimum.
\end{proof}

We now give a criterion for containment to fail, and thence a lower bound for $\rho(I)$:

\begin{lem}[Postulational Criterion 1]\label{postcrit1}
Let $0\ne I\subsetneq k[{\bf P}^N]$ be a homogeneous ideal. 
Then $\gamma(I)\ge1$ and we have:
\begin{itemize}
\item[(a)] If $r\alpha(I)>\alpha(I^{(m)})$, then $I^r$ does not contain $I^{(m)}$.
\item[(b)] If $m/r< \alpha(I)/\gamma(I)$, then, for all $t\gg 0$, $I^{rt}$ does not contain $I^{(mt)}$. 
In particular, $1\le\alpha(I)/\gamma(I)\le \rho(I)$.
\end{itemize}
\end{lem}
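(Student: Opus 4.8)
The plan is to prove the two containment-failure statements (a) and (b) using the first containment principle (comparing zero loci / degree envelopes), then deduce the numerical bound $1\le\alpha(I)/\gamma(I)\le\rho(I)$ as a formal consequence, handling $\gamma(I)\ge 1$ along the way.

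\medskip
\textbf{Step 1: $\gamma(I)\ge 1$.} Since $I$ is nontrivial, $\alpha(I)\ge 1$, and more generally for any ideal we have $\alpha(J_1J_2)=\alpha(J_1)+\alpha(J_2)$ while $\alpha$ is only subadditive on symbolic powers. The key point is $I^{(m)}\subseteq M^{?}$: I would argue that since $I^m\subseteq I^{(m)}$ is false in the wrong direction, instead use $I^{(m)}\subseteq I^{(1)}=I$ when... no. The clean argument: $\alpha(I^{(m)})\ge m$ because $I^{(m)}\subseteq M^m$ is \emph{not} automatic, but $\alpha(I^{(m)})\ge \alpha(I^m)/1$ fails too. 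The right route is: for a nontrivial homogeneous ideal, $\alpha(I^{(m)})\ge \alpha(I)\ge 1$ is too weak; one needs $\alpha(I^{(m)})\ge m\alpha(I^{(1)})/1$... Actually the correct and standard fact is that $\alpha(I^{(m)})$ grows at least linearly with slope $\ge 1$ because $I^{(m)}\subseteq M^{m}$ whenever $\alpha(I)\ge 1$: indeed, if $F\in I^{(m)}$, then $F$ vanishes to order $\ge m$ along each associated prime of $I$ of height $\ge 1$, hence $F\in M^m$ once we pick an associated prime contained in $M$ (which exists since $I$ is a proper homogeneous ideal, so some associated prime sits inside $M$). Thus $\alpha(I^{(m)})\ge m$, giving $\gamma(I)=\lim\alpha(I^{(m)})/m\ge 1$.

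\medskip
\textbf{Step 2: part (a).} Suppose $r\alpha(I)>\alpha(I^{(m)})$; I want $I^{(m)}\not\subseteq I^r$. Pick a nonzero form $F\in I^{(m)}$ of degree $\alpha(I^{(m)})$. If $I^{(m)}\subseteq I^r$ then $F\in (I^r)_{\alpha(I^{(m)})}$. But $\alpha(I^r)=r\alpha(I)>\alpha(I^{(m)})$, so $(I^r)_{\alpha(I^{(m)})}=(0)$, forcing $F=0$, a contradiction. (This is precisely the first containment principle applied at degree $t=\alpha(I^{(m)})$: $I^r$ has empty/everything degree envelope there while $I^{(m)}$ does not.) This step is short and essentially a one-line degree count.

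\medskip
\textbf{Step 3: part (b) and the numerical bound.} Assume $m/r<\alpha(I)/\gamma(I)$. I want to produce, for all $t\gg0$, a violation $r t\,\alpha(I)>\alpha(I^{(mt)})$ and then invoke (a) with $(m,r)$ replaced by $(mt,rt)$ (noting $(I^{rt})=(I^r)^t$ has $\alpha$ equal to $rt\alpha(I)$, and $I^{(mt)}=(I^{(m)})$-iterated in the sense $I^{(mt)}\supseteq(I^{(m)})^t$... but for $\alpha$ I just need $\alpha(I^{(mt)})/(mt)\to\gamma(I)$, which is exactly the defining limit). Concretely: $\alpha(I^{(mt)})/(mt)\to\gamma(I)$ as $t\to\infty$, so for $t\gg0$, $\alpha(I^{(mt)})<(mt)\cdot\frac{\alpha(I)}{m/r}\cdot\frac{m}{r}$... let me state it cleanly: the hypothesis gives $\gamma(I)<\frac{r\alpha(I)}{m}$, so for $t\gg0$ we have $\alpha(I^{(mt)})/(mt)<\frac{r\alpha(I)}{m}$, i.e. $\alpha(I^{(mt)})<rt\,\alpha(I)=\alpha(I^{rt})$, and part (a) (applied with exponents $mt,rt$) yields $I^{(mt)}\not\subseteq I^{rt}$. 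Finally, for the resurgence bound: the existence of infinitely many such pairs $(mt,rt)$ with ratio $m/r$ shows $\rho(I)\ge m/r$ for every $m/r<\alpha(I)/\gamma(I)$ with $m,r$ rational-expressible, hence $\rho(I)\ge\alpha(I)/\gamma(I)$ by taking a supremum/limit over such ratios; combined with $\gamma(I)\ge1\le\alpha(I)$ (so $\alpha(I)/\gamma(I)\ge 1$) — actually $\alpha(I)/\gamma(I)\ge 1$ needs $\gamma(I)\le\alpha(I)$, which holds since $\alpha(I^{(m)})\le\alpha(I^m)=m\alpha(I)$ gives $\gamma(I)\le\alpha(I)$ — we get $1\le\alpha(I)/\gamma(I)\le\rho(I)$.

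\medskip
\textbf{Main obstacle.} The only genuinely delicate point is Step 1, pinning down why $\alpha(I^{(m)})\ge m$, i.e. that some associated prime of $I$ lies in $M$ so the symbolic power is forced into $M^m$; everything else is bookkeeping with the limit defining $\gamma$ and a degree count. I would also need to be a little careful in Step 3 that passing from $(m,r)$ to $(mt,rt)$ is legitimate for the resurgence — it is, since $mt/rt=m/r$ and $\rho$ is a sup over \emph{all} non-containment pairs — and that the supremum of ratios $m/r$ strictly below $\alpha(I)/\gamma(I)$ does reach $\alpha(I)/\gamma(I)$, which follows from density of such ratios if $\alpha(I)/\gamma(I)$ is not attained, and trivially from (a)+(b) directly if it is.
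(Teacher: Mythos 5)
Your Steps 2 and 3 are correct and are essentially the paper's own argument: part (a) is exactly the degree count at $t=\alpha(I^{(m)})$ using $\alpha(I^r)=r\alpha(I)$, and part (b) is obtained, as in the paper, by applying (a) to the pairs $(mt,rt)$ once $t$ is large enough that $\alpha(I^{(mt)})/(mt)$ is within $r\alpha(I)/m-\gamma(I)$ of its limit; the deductions $\rho(I)\ge\alpha(I)/\gamma(I)$ (sup over ratios $m/r<\alpha(I)/\gamma(I)$) and $\alpha(I)/\gamma(I)\ge 1$ (from $\alpha(I^{(m)})\le\alpha(I^m)=m\alpha(I)$, hence $\gamma(I)\le\alpha(I)$) are also the paper's.

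The gap is in Step 1, the claim $\gamma(I)\ge 1$ (which the paper does not prove either, but outsources to \cite[Lemma 8.2.2]{refPSC}). Your chain is: $F\in I^{(m)}$ implies $F\in P^{(m)}$ for each associated prime $P$ of $I$ (fine: $I^mR_A\cap R\subseteq I^mR_P\cap R\subseteq P^mR_P\cap R$), and then ``hence $F\in M^m$ once we pick an associated prime contained in $M$.'' The existence of such a prime is trivial (associated primes of a homogeneous ideal are homogeneous and proper, so all of them lie in $M$), so, contrary to your closing remark, that is not where the delicacy sits. The delicate point is the implication $P^{(m)}\subseteq M^m$, i.e.\ $\alpha(P^{(m)})\ge m$, and this is not a formal consequence of $P\subseteq M$: from $sF\in P^m$ with $s\notin P$ you learn nothing about $\deg F$, since $s$ may have large degree and may well lie in $M$; symbolic powers do not pass naively to ordinary powers of a larger prime. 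What makes the containment true here is that $k[{\bf P}^N]$ is regular and $k$ is perfect: it is the Zariski--Nagata theorem ($P^{(m)}=\bigcap_{\mathfrak m\supseteq P}\mathfrak m^m$ over maximal ideals), or equivalently the semicontinuity of the order of vanishing (the order of $F$ at a closed point of $V(P)$ is at least its order at the generic point), combined with the elementary fact that a homogeneous form lying in the $m$-th power of the ideal of a point has degree at least $m$. As written, your ``hence'' simply asserts the nontrivial fact that Step 1 is supposed to establish; you need either the citation the paper uses or an argument of the above type to close it.
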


\begin{proof} For $\gamma(I)\ge1$, see \cite[Lemma 8.2.2]{refPSC}.

(a) This is because $(I^r)_t = 0$ but $(I^{(m)})_t \ne 0$
for $t = \alpha(I^{(m)})$, since $\alpha(I^r)=r\alpha(I)>\alpha(I^{(m)})$.

(b) Suppose $m/r<\alpha(I)/\gamma(I)$. Let $0< \delta$ be such that
$m/r < \alpha(I)/(\delta+\gamma(I))$.
By definition, $\alpha(I^{(mt)})/(mt) \le \gamma(I)+\delta$ for $t\gg0$,
so $\alpha(I^{(mt)}) \le mt(\gamma(I)+\delta)<rt\alpha(I)$ for $t\gg0$,
and hence $I^{rt}$ does not contain $I^{(mt)}$ for $t\gg 0$, which now
implies $\alpha(I)/\gamma(I)\le \rho(I)$. Finally, by subadditivity, as in the proof of
Lemma \ref{subadd}, we have $\gamma(I)\le \alpha(I)$, hence
$1\le \alpha(I)/\gamma(I)$. 
\end{proof}

It is possible to give refined versions of Lemma \ref{postcrit1},
in which both $(I^r)_t$ and $(I^{(m)})_t$ may be nonzero,
but in which the zero locus of the former is bigger than that of the latter.
These refined versions are useful in doing examples and
will be the topic of a subsequent paper, \cite{refBH2}. 

We next develop our criteria for containment to hold.
First we recall a few well known facts.

\begin{lem}\label{GGPlem} Let $0\ne I\subsetneq k[{\bf P}^N]$ be a homogeneous ideal.
\begin{itemize}
\item[(a)] If $I^{(m)} \subseteq I^r$, then $r\le m$.
\item[(b)] We have $\alpha(I^{(m)})\le m\alpha(I)$ and $\alpha(I)\le {\rm reg}(I)$.
\item[(c)] If $I$ defines a 0-dimensional subscheme and $t \ge r{\rm reg}(I)$, 
then $(I^r)_t = (\hbox{sat}(I^r))_t$;
in particular, if $I$ is saturated and defines a 0-dimensional subscheme, then 
then $t \ge r\sigma(I)$ implies $(I^r)_t = (I^{(r)})_t$.
\end{itemize}
\end{lem}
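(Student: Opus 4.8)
The three parts are all standard, so the plan is to recall the shortest arguments for each. For part (a): if $I^{(m)}\subseteq I^r$ and $I$ is nontrivial, pick any associated prime $P$ of $I$; localizing at $P$ and passing to the associated graded ring (or simply comparing $M$-adic orders at $P$), the hypothesis forces $mP \le rP$-worth of vanishing in a suitable valuation, but the cleanest route is to note $\alpha(I^{(m)}) \ge \alpha(I^r) = r\alpha(I)$, combine with $\alpha(I^{(m)})\le m\alpha(I)$ from part (b), and divide by $\alpha(I)>0$ to get $r\le m$. So (a) follows formally from (b). For part (b), the inequality $\alpha(I^{(m)})\le m\alpha(I)$ is immediate: a product of $m$ forms each of degree $\alpha(I)$ lying in $I$ lies in $I^m\subseteq I^{(m)}$ and has degree $m\alpha(I)$, so $(I^{(m)})_{m\alpha(I)}\ne 0$. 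The inequality $\alpha(I)\le {\rm reg}(I)$ is a general fact about Castelnuovo--Mumford regularity: $\alpha(I)$ is the least degree of a minimal generator of $I$, which appears as one of the $b_{0j}$, and ${\rm reg}(I)=\max_{i,j}(b_{ij}-i)\ge b_{0j}-0 = b_{0j}\ge \alpha(I)$.

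For part (c), the point is the relationship between regularity and saturation. Recall that for a homogeneous ideal $J$, if $J$ is $s$-regular then $J_t = \hbox{sat}(J)_t$ for all $t\ge s$, since the local cohomology modules $H^0_M(R/J)$ and $H^1_M(R/J)$ (which measure the failure of $J$ to be saturated) vanish in degrees $\ge s$. So it suffices to show that $I^r$ is $r\,{\rm reg}(I)$-regular when $I$ defines a $0$-dimensional subscheme; then $t\ge r\,{\rm reg}(I)$ gives $(I^r)_t = (\hbox{sat}(I^r))_t$. The regularity bound ${\rm reg}(I^r)\le r\,{\rm reg}(I)$ for $r\gg 0$ is a theorem of Cutkosky--Herzog--Trung and Kodiyalam, but in fact for the $0$-dimensional case one wants it for all $r\ge 1$; here the cleanest argument uses that $I$ is generated in degrees $\le {\rm reg}(I)$ with linear syzygies thereafter, and a direct induction on $r$ via the exact sequence $0\to I^r\to I^{r-1}\to I^{r-1}/I^r\to 0$ together with the fact that $I^{r-1}/I^r$ is (up to regularity shift) controlled by $R/I$, whose regularity for a $0$-dimensional scheme is $\sigma(\hbox{sat}(I))-1 < {\rm reg}(I)$. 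The final sentence is then just the special case: if $I$ is saturated and $0$-dimensional, ${\rm reg}(I)=\sigma(I)$ (as recalled in the excerpt, following \cite{refGGP}), so $\hbox{sat}(I^r)=I^{(r)}$ (since $I^{(r)}$ is exactly the saturation of $I^r$ for a $0$-dimensional scheme) and $t\ge r\sigma(I)$ gives $(I^r)_t=(I^{(r)})_t$.

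The main obstacle is part (c): getting the regularity bound ${\rm reg}(I^r)\le r\,{\rm reg}(I)$ to hold for \emph{every} $r\ge 1$ rather than just asymptotically. In the $0$-dimensional setting this is manageable because $I^{r-1}/I^r$ has a resolution built from $\hbox{Sym}$ and $\hbox{Tor}$ data over $R/I$, and $R/I$ has very small regularity; the induction then propagates the bound cleanly. The expository choice is to cite \cite{refGGP} for the characterization ${\rm reg}(I)=\max(\hbox{satdeg}(I),\sigma(\hbox{sat}(I)))$ in the $0$-dimensional case and for the regularity-powers estimate, reducing parts (a) and (b) to the elementary degree count above.
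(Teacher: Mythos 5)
Your proposal is correct and follows essentially the same route as the paper: (a) and (b) are the same degree count ($I^m\subseteq I^{(m)}\subseteq I^r$ together with subadditivity of $\alpha$ and the resolution description of regularity), and for (c) you ultimately rely, as the paper does, on the bound ${\rm reg}(I^r)\le r\,{\rm reg}(I)$ for ideals defining $0$-dimensional subschemes from \cite{refGGP}, combined with ${\rm reg}\ge \hbox{satdeg}$ and the identification $I^{(r)}=\hbox{sat}(I^r)$ when $I$ is saturated and $0$-dimensional. Just be aware that your sketched induction via $0\to I^r\to I^{r-1}\to I^{r-1}/I^r\to 0$ is not by itself a proof (regularity is not controlled by a surjection from shifted copies of $R/I$ without using $\dim R/I\le 1$ in an essential way, and the asymptotic results of Cutkosky--Herzog--Trung and Kodiyalam do not yield the bound for every $r\ge 1$), so the citation of \cite{refGGP} is the load-bearing step, exactly as in the paper's proof.
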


\begin{proof}
(a) We have $I^m\subseteq I^{(m)} \subseteq I^r$, hence
$m\alpha(I) = \alpha(I^m) \ge \alpha(I^r) = r\alpha(I)$
so $m\ge r$.

(b) The claim $\alpha(I^{(m)})\le m\alpha(I)$ follows by the subadditivity
of $\alpha$.
The second claim is immediate from the definition of regularity,
since ${\rm reg}(I)$ is at least as much as the degree of the 
homogeneous generator of greatest degree in any minimal set of homogeneous generators of $I$,
while $\alpha(I)$ is the degree of the generator of least degree.

(c) We argue as in the proof of Proposition 2.1 of \cite{refAV}.
By Theorem 1.1 of \cite{refGGP}, $r{\rm reg}(I)\ge {\rm reg}(I^r)\ge \hbox{satdeg}(I^r)$, hence 
$t\ge r{\rm reg}(I)$ implies $(I^r)_t=(\hbox{sat}(I^r))_t$.
The second statement is just an instance of the first.
\end{proof} 

Here we give a criterion for containment to hold:

\begin{lem}[Postulational Criterion 2]\label{postcrit2}
Let $I\subseteq k[{\bf P}^N]$ be a homogeneous ideal 
(not necessarily saturated) defining
a 0-dimensional subscheme.
If $r{\rm reg}(I) \le \alpha(I^{(m)})$, then $I^{(m)} \subseteq I^r$.
\end{lem}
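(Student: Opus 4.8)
The plan is to exploit the ``second idea'' from the Containment Principles, which is exactly the chain-of-symbolic-powers trick, and to feed it with the regularity estimate of Lemma \ref{GGPlem}(c). The point is that, since $I$ defines a $0$-dimensional subscheme, we have $(I^{(m)})_t=(0)\subseteq (I^r)_t$ automatically for $t<\alpha(I^{(m)})$, so the containment $I^{(m)}\subseteq I^r$ is a statement purely about the homogeneous components in degrees $t\ge \alpha(I^{(m)})$.

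The key steps, in order, are as follows. First I would note that $m\ge r$: indeed $r{\rm reg}(I)\le \alpha(I^{(m)})\le m\alpha(I)\le m\,{\rm reg}(I)$ by Lemma \ref{GGPlem}(b), so $r\le m$, hence $I^{(m)}\subseteq I^{(r)}$ by the elementary monotonicity of symbolic powers. Second, for any $t\ge \alpha(I^{(m)})$ the hypothesis gives $t\ge r\,{\rm reg}(I)$, so Lemma \ref{GGPlem}(c) yields $(I^{(r)})_t=(\hbox{sat}(I^r))_t\supseteq (I^r)_t$; more precisely, what I want is the reverse-flavored inclusion $(\hbox{sat}(I^r))_t=(I^r)_t$ for $t\ge r\,{\rm reg}(I)$, which is exactly what Lemma \ref{GGPlem}(c) asserts (applied with the not-necessarily-saturated ideal $I$, whose symbolic powers are those of its saturation). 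Combining, for $t\ge \alpha(I^{(m)})$ we get
$$(I^{(m)})_t\subseteq (I^{(r)})_t=(\hbox{sat}(I^r))_t=(I^r)_t.$$
Third, for $t<\alpha(I^{(m)})$ we have $(I^{(m)})_t=(0)\subseteq (I^r)_t$ trivially. Hence $(I^{(m)})_t\subseteq (I^r)_t$ in every degree, which is precisely $I^{(m)}\subseteq I^r$.

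The only mildly delicate point — and the place I would be most careful — is making sure the statement of Lemma \ref{GGPlem}(c) is applied in the form I actually need: it says that for $t\ge r\,{\rm reg}(I)$ one has $(I^r)_t=(\hbox{sat}(I^r))_t$, and that $(I^{(r)})_t$ agrees with these (the symbolic power of $I$ being computed via the saturation, as in the paper's conventions for $0$-dimensional subschemes). So $(I^{(m)})_t\subseteq (I^{(r)})_t$ together with $(I^{(r)})_t=(I^r)_t$ in the relevant degree range is all that is required; there is no circularity and no hypothesis that $I$ itself be saturated. Everything else is a one-line degree bookkeeping argument, so I do not anticipate a genuine obstacle.
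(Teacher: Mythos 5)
Your skeleton is the paper's: from $r\,{\rm reg}(I)\le\alpha(I^{(m)})\le m\,{\rm reg}(I)$ get $r\le m$, hence $I^{(m)}\subseteq I^{(r)}$; identify $(I^{(r)})_t$ with $(I^r)_t$ in degrees $t\ge r\,{\rm reg}(I)$ via Lemma \ref{GGPlem}(c); and note $(I^{(m)})_t=0$ below $\alpha(I^{(m)})$. The one point you yourself flag as delicate is, however, where your justification is wrong. Under the paper's definition, $I^{(m)}$ is the contraction of $I^mR_A$ where $A$ is the complement of the union of the associated primes of $I$ \emph{itself} — not of its saturation. If $I$ is not saturated, the irrelevant maximal ideal $M$ is an associated prime of $I$, so no primary component of $I^m$ is discarded and in fact $I^{(m)}=I^m$ for every $m$; this is quite different from the saturation-based symbolic power ${\rm sat}(I)^{(m)}$ (and in general has a different $\alpha$, so your reading would even change the meaning of the hypothesis $r\,{\rm reg}(I)\le\alpha(I^{(m)})$). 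Moreover, Lemma \ref{GGPlem}(c) in general only asserts $(I^r)_t=(\hbox{sat}(I^r))_t$ for $t\ge r\,{\rm reg}(I)$; the identification of these with $(I^{(r)})_t$ appears only in its clause for saturated $I$. So in the non-saturated case the middle equality $(I^{(r)})_t=(\hbox{sat}(I^r))_t$ in your chain is not supplied by the lemma as you invoke it, and the convention you appeal to is false for this paper.

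The gap is small and is exactly what the paper's proof addresses in one line: if $I$ is not saturated, then $M$ being an associated prime gives $I^{(m)}=I^m\subseteq I^r$ directly (using $m\ge r$), so one may assume $I$ is saturated, after which your chain $(I^{(m)})_t\subseteq(I^{(r)})_t=(I^r)_t$ for $t\ge r\,{\rm reg}(I)$ is precisely the paper's argument via the saturated clause of Lemma \ref{GGPlem}(c). Alternatively you could note that in the non-saturated case $(I^{(r)})_t=(I^r)_t$ holds trivially in every degree, which also repairs the chain — but some such case distinction is needed; the lemma's parenthetical ``not necessarily saturated'' is there precisely because of this subtlety, and a blanket appeal to computing symbolic powers through the saturation does not cover it.
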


\begin{proof} First, $r{\rm reg}(I) \le \alpha(I^{(m)})\le m\alpha(I)\le m{\rm reg}(I)$,
so $r\le m$, hence $(I^{(m)})_t \subseteq (I^{(r)})_t$ for all $t\ge0$.
Moreover, if $I$ is not saturated, then the maximal homogeneous ideal 
$M$ is an associated prime, so $I^{(m)}=I^m$ for all
$m\ge 1$, hence $I^{(m)}=I^m\subseteq I^r$. Thus 
we may as well assume that $I$ is saturated.
But $(I^r)_t= (I^{(r)})_t$ by Lemma \ref{GGPlem}(c)
for $t \ge r{\rm reg}(I)$, 
while $r{\rm reg}(I) \le \alpha(I^{(m)})$
implies $(I^{(m)})_t=0 \subseteq (I^r)_t$ for $t<r{\rm reg}(I)$.
\end{proof}

As an application of Postulational Criterion 2 we have:

\begin{cor}\label{PCcor} Let $I\subseteq k[{\bf P}^N]$ be a homogeneous ideal 
(not necessarily saturated) defining
a 0-dimensional subscheme. If $c$ is a positive
real number such that $mc \leq \alpha(I^{(m)})$ for all $m \ge 1$,
then $I^{(m)} \subseteq I^r$ if $m/r \ge {\rm reg}(I)/c$; in particular,
$\rho(I) \le {\rm reg}(I)/c$.
\end{cor}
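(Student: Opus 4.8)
The plan is to derive Corollary \ref{PCcor} directly from Postulational Criterion 2 (Lemma \ref{postcrit2}) by using the hypothesis $mc\le\alpha(I^{(m)})$ to verify the inequality $r\,{\rm reg}(I)\le\alpha(I^{(m)})$ needed there. First I would fix $r,m\ge1$ with $m/r\ge{\rm reg}(I)/c$, which rearranges to $mc\ge r\,{\rm reg}(I)$. Combining this with the standing hypothesis $mc\le\alpha(I^{(m)})$ gives $r\,{\rm reg}(I)\le mc\le\alpha(I^{(m)})$, so Lemma \ref{postcrit2} applies verbatim and yields $I^{(m)}\subseteq I^r$. One small point to be careful about: if ${\rm reg}(I)=0$ the inequality $m/r\ge{\rm reg}(I)/c$ is vacuous, but in that case $r\,{\rm reg}(I)=0\le\alpha(I^{(m)})$ holds trivially anyway (and $\alpha(I^{(m)})\ge0$ always, with $I^{(m)}\ne0$ since $I\ne0$), so Lemma \ref{postcrit2} still applies; in practice ${\rm reg}(I)\ge\alpha(I)\ge1$ for the ideals we care about, so this is a non-issue.

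For the "in particular" clause bounding $\rho(I)$, I would argue that any pair $(r,m)$ with $m/r>{\rm reg}(I)/c$ satisfies $m/r\ge{\rm reg}(I)/c$ and hence, by what was just shown, $I^{(m)}\subseteq I^r$. Therefore no such pair contributes to the set $\{m/r: I^{(m)}\not\subseteq I^r\}$ whose supremum defines $\rho(I)$, so every element of that set is $\le{\rm reg}(I)/c$, giving $\rho(I)\le{\rm reg}(I)/c$. Here I am implicitly using that the supremum exists and is finite, which follows from the discussion in the introduction (and from Lemma \ref{postcrit1}(b) together with the fact that $I$ defines a $0$-dimensional subscheme, so $\rho(I)\le N$), but strictly speaking the inequality $\rho(I)\le{\rm reg}(I)/c$ is just the statement that ${\rm reg}(I)/c$ is an upper bound for the relevant set of ratios, which holds regardless.

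There is essentially no main obstacle here: the corollary is a clean repackaging of Lemma \ref{postcrit2}, trading the hypothesis "$r\,{\rm reg}(I)\le\alpha(I^{(m)})$" for the more usable "$mc\le\alpha(I^{(m)})$ for all $m$" plus a ratio condition on $(r,m)$. The only thing that requires a moment's thought is making sure the chain of inequalities $r\,{\rm reg}(I)\le mc\le\alpha(I^{(m)})$ is assembled in the right direction, and noting that the hypothesis on $c$ is assumed for \emph{all} $m\ge1$ so it is in particular available for the specific $m$ at hand. The whole argument is two or three lines.
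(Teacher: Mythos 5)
Your proposal is correct and follows exactly the paper's own route: the chain $r\,{\rm reg}(I)\le mc\le\alpha(I^{(m)})$ feeding into Lemma \ref{postcrit2}, with the bound on $\rho(I)$ an immediate consequence. Nothing further is needed.
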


\begin{proof} By Lemma \ref{postcrit2},
$r{\rm reg}(I)\le mc$, or equivalently
${\rm reg}(I)/c\le m/r$, implies $I^{(m)} \subseteq I^r$.
\end{proof}

\begin{Rmk}\label{SCrem}\rm
Let $I\subseteq k[{\bf P}^N]$ be a homogeneous ideal defining a 0-dimensional subscheme.
Since we can evaluate limits on subsequences
and since by subadditivity the sequence $\alpha(I^{(i!m)})/(i!m)$ is non-increasing,
we see that $\gamma(I)\le \alpha(I^{(m)})/m$ for all $m\ge 1$.
Thus the $c$ in Corollary \ref{PCcor} can be taken to be $\gamma(I)$. 
It is reasonable to ask: why not just take
$c=\gamma(I)$? Unfortunately, the exact value of $\gamma(I)$ is rarely known
even if  $I=I(Z)$ for a fat point subscheme 
$Z=m_1p_1+\cdots+m_np_n$ in ${\bf P}^2$,
so it is useful that the statement not be in terms of $\gamma(I)$. 
On the other hand, good lower bounds are known for $\gamma(Z)$ 
in certain cases
(see for example \cite{refB}, \cite{refH1}, \cite{refHR}, \cite{refST} 
and \cite{refT}, among many others). Also, exact values are known in some cases,
such as when $\hbox{Supp}(Z)$ consists of any $n\le 8$ points
in ${\bf P}^2$. (Since the subsemigroup of classes of effective divisors for a blow up of ${\bf P}^2$ at
$n\le 8$ points is polyhedral and the postulation for any such $Z$ is known, 
one can explicitly determine $\gamma(Z)$ in this situation if one knows
the effective subsemigroup.
The effective subsemigroups for all subsets of $n\le 8$ points of the plane
are now known, as a consequence of the classification of
the configuration types of $n\le 8$ points of ${\bf P}^2$, given in
\cite{refGuardoHar} for $n\le 6$ and 
\cite{refGHM} for $7\le n\le 8$.) 
\end{Rmk}

\begin{cor}\label{Bndscor} Let $I=I(Z)$ for a nontrivial fat point subscheme $Z\subset{\bf P}^N$.  
If $\alpha(I) = \sigma(I)$, then $\rho(I) = \alpha(I)/\gamma(I)$.
\end{cor}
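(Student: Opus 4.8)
The plan is to sandwich $\rho(I)$ between two bounds that coincide under the hypothesis $\alpha(I)=\sigma(I)$. For the lower bound I would invoke Lemma \ref{postcrit1}(b) directly: since $Z$ is a nontrivial fat point subscheme, $I=I(Z)$ is a nonzero homogeneous ideal defining a $0$-dimensional subscheme, and the lemma gives $\alpha(I)/\gamma(I)\le\rho(I)$ with no extra assumptions. So half the equality is free, and only the reverse inequality $\rho(I)\le\alpha(I)/\gamma(I)$ needs the hypothesis.

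For the upper bound I would apply Corollary \ref{PCcor}. Here I need a positive real $c$ with $mc\le\alpha(I^{(m)})$ for all $m\ge1$; by Remark \ref{SCrem} the choice $c=\gamma(I)$ works, since $\gamma(I)\le\alpha(I^{(m)})/m$ for every $m\ge1$ when $I$ defines a $0$-dimensional subscheme. Corollary \ref{PCcor} then yields $\rho(I)\le{\rm reg}(I)/\gamma(I)$. At this point the hypothesis enters: for a fat point subscheme $Z$, the ideal $I=I(Z)$ is saturated and defines a $0$-dimensional subscheme, so by the discussion of regularity in the excerpt ${\rm reg}(I)=\sigma(I)$. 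Combining with $\sigma(I)=\alpha(I)$ gives ${\rm reg}(I)=\alpha(I)$, hence $\rho(I)\le{\rm reg}(I)/\gamma(I)=\alpha(I)/\gamma(I)$.

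Putting the two inequalities together gives $\rho(I)=\alpha(I)/\gamma(I)$, as claimed. Essentially this corollary is just the specialization of Theorem \ref{SCthm} (both parts) to the case where $\alpha$ and $\sigma$ agree, so one could alternatively cite Theorem \ref{SCthm}(a) and (b) together with ${\rm reg}(I)=\sigma(I)=\alpha(I)$; I would present whichever is cleaner given what has been proved by this point in the paper.

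I do not anticipate a genuine obstacle here — every ingredient is already in place. The only point requiring a moment's care is the bookkeeping around why $I=I(Z)$ for a fat point subscheme is automatically saturated with ${\rm reg}(I)=\sigma(I)$: this is exactly the statement recalled in the subsection on postulational invariants (citing \cite{refGGP}), so it should be invoked explicitly rather than left implicit. Everything else is a direct substitution into Lemma \ref{postcrit1}(b), Remark \ref{SCrem}, and Corollary \ref{PCcor}.
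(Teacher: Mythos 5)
Your argument is correct and is essentially the paper's own proof: the paper derives the corollary immediately from Lemma \ref{postcrit1} (lower bound), Remark \ref{SCrem} with $c=\gamma(I)$ fed into Corollary \ref{PCcor} (upper bound ${\rm reg}(I)/\gamma(I)$), and the identification ${\rm reg}(I)=\sigma(I)=\alpha(I)$ for the saturated ideal of a fat point subscheme. Your explicit remark about why ${\rm reg}(I)=\sigma(I)$ holds (via \cite{refGGP}) is exactly the right point to make explicit.
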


\begin{proof} This is immediate from Corollary \ref{PCcor},
Remark \ref{SCrem} and Lemma \ref{postcrit1}. 
\end{proof}

\begin{Rmk}\label{Omegarem}\rm
One can sometimes do better using non-postulational data.
The paper \cite{refEHU} gives various bounds
on the regularity under various assumptions.
For another example that we will refer to in Section \ref{AECQ}, 
let $I\subset k[{\bf P}^N]$ be a homogeneous
ideal defining a 0-dimensional subscheme. Then ${\rm reg}(I^r)\le r\omega(I)+
2({\rm reg}(I)-\omega(I))$ for any $r\ge 2$ by Theorem 0.4 of \cite{refCh}
(or see Section 6 of \cite{refCh2}), 
where $\omega(I)$ is the maximum degree
of a generator in any minimal set of homogeneous generators of $I$. 
Replacing $r{\rm reg}(I)$ by  $r\omega(I)+
2({\rm reg}(I)-\omega(I))$ in the argument
of the proof of Lemma \ref{postcrit2} and then arguing as in
Lemma \ref{PCcor}, keeping in mind Remark \ref{SCrem},
gives $I^{(m)} \subseteq I^r$ if 
$m/r\ge (\omega(I)+2({\rm reg}(I)-\omega(I))/r)/\gamma(I)$.
\end{Rmk}

\begin{cor}\label{asympcor} Let $I$ define a 0-dimensional subscheme of ${\bf P}^N$
and let $c>\omega(I)/\gamma(I)$.  
Then $I^{(m)} \subseteq I^r$ for all but finitely many pairs $(m,r)$ with $m/r\ge c$.
In particular, if $m/r > \omega(I)/\gamma(I)$, then $I^{(mt)} \subseteq I^{rt}$
for all $t\gg 0$.
\end{cor}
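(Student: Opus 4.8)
The plan is to reduce Corollary~\ref{asympcor} to the estimate in Remark~\ref{Omegarem}. Recall that remark gives: if $I$ defines a $0$-dimensional subscheme, then for all $r\ge 2$ we have $I^{(m)}\subseteq I^r$ whenever
\[
\frac{m}{r}\ \ge\ \frac{\omega(I)+2(\mathrm{reg}(I)-\omega(I))/r}{\gamma(I)}.
\]
So the first step is to fix a real number $c>\omega(I)/\gamma(I)$ and write $c=\big(\omega(I)+\epsilon\big)/\gamma(I)$ for some $\epsilon>0$ (using $\gamma(I)>0$ from Lemma~\ref{postcrit1}). Then, whenever $r$ is large enough that $2(\mathrm{reg}(I)-\omega(I))/r\le \epsilon$ — say $r\ge r_0$ for some explicit $r_0$ depending only on $I$ and $\epsilon$ — the displayed inequality of Remark~\ref{Omegarem} is implied by $m/r\ge c$, and hence $I^{(m)}\subseteq I^r$.

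The second step is to dispose of the finitely many small values of $r$. For each fixed $r$ with $1\le r<r_0$, I would argue that $I^{(m)}\subseteq I^r$ for all but finitely many $m$ with $m/r\ge c$: indeed, by Postulational Criterion~2 (Lemma~\ref{postcrit2}) it suffices that $r\,\mathrm{reg}(I)\le \alpha(I^{(m)})$, and since $\alpha(I^{(m)})\ge m\gamma(I)$ by Remark~\ref{SCrem} (again using $\gamma(I)>0$), this holds as soon as $m\ge r\,\mathrm{reg}(I)/\gamma(I)$. For each of the finitely many values $r<r_0$ this excludes only finitely many $m$, so altogether only finitely many pairs $(m,r)$ with $m/r\ge c$ fail the containment $I^{(m)}\subseteq I^r$. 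This proves the first assertion.

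For the "in particular" statement, suppose $m/r>\omega(I)/\gamma(I)$. Set $c=m/r$; then $c>\omega(I)/\gamma(I)$, and the pairs $(mt,rt)$ for $t\ge 1$ all satisfy $(mt)/(rt)=c$, so by the first part all but finitely many of them satisfy $I^{(mt)}\subseteq I^{rt}$; since they are indexed by $t$, this means $I^{(mt)}\subseteq I^{rt}$ for all $t\gg0$.

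The approach is essentially bookkeeping once Remark~\ref{Omegarem} is in hand; the only mild subtlety — and the step I would be most careful about — is the interplay between the two regimes. Remark~\ref{Omegarem}'s bound, which uses Chardin's linear-growth bound on $\mathrm{reg}(I^r)$, is only effective for $r\ge 2$ and only becomes as good as the naive $\omega(I)/\gamma(I)$ threshold in the limit $r\to\infty$; meanwhile Postulational Criterion~2 handles each fixed $r$ but with a threshold $r\,\mathrm{reg}(I)/\gamma(I)$ that grows with $r$, so it alone would not give a uniform statement. The point is that the two cover complementary ranges of $r$ (bounded $r$ versus $r\ge r_0$), and one must check that the cutoff $r_0$ depends only on $I$ and $c$, not on $m$, so that the union of the two exceptional sets is genuinely finite. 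No new ideas beyond the already-proved lemmas are needed.
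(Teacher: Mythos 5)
Your argument is correct, and it differs mildly from the paper's in how the exceptional set is shown to be finite. The paper's proof also rests on Remark \ref{Omegarem}, but handles the finiteness in one stroke: that remark gives containment whenever $(m,r)$ lies on or above the line $m=(\omega(I)/\gamma(I))\,r+2(\mathrm{reg}(I)-\omega(I))/\gamma(I)$, and since $c$ exceeds the slope $\omega(I)/\gamma(I)$ of this line, the pairs with $m/r\ge c$ lying below it fill a bounded wedge and hence are finite in number; the ``in particular'' statement is then immediate, exactly as in your last paragraph. You instead split into two regimes: for $r\ge r_0$, with $r_0$ chosen so that $2(\mathrm{reg}(I)-\omega(I))/r\le\epsilon$ where $\epsilon=c\gamma(I)-\omega(I)>0$, you apply Remark \ref{Omegarem} directly, and for each of the finitely many $r<r_0$ you fall back on Lemma \ref{postcrit2} combined with the bound $\alpha(I^{(m)})\ge m\gamma(I)$ of Remark \ref{SCrem}, which excludes only finitely many $m$ for each such $r$; the union of the exceptional sets is therefore finite, with $r_0$ depending only on $I$ and $c$ as you note. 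Both proofs are short and hinge on the same key input; the paper's lattice-point count is a bit more economical, while your regime-splitting has the minor virtue of making explicit how small values of $r$ are covered (in particular $r=1$, and the fact that Chardin's bound behind Remark \ref{Omegarem} is stated only for $r\ge2$ and is weak for small $r$), which the paper's one-line geometric argument leaves implicit.
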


\begin{proof} By Remark \ref{Omegarem}, we have $I^{(m)} \subseteq I^r$
if $(m,r)$ is on or above the line 
$m = (\omega(I)/\gamma(I))r+2({\rm reg}(I)-\omega(I))/\gamma(I)$.
But $c$ is greater than the slope $\omega(I)/\gamma(I)$ of this line, so there are only
finitely many pairs $(m,r)$ with $m/r \ge c$ below this line.
The second statement is now immediate.
\end{proof}

\subsection{Constructions showing Optimality}\label{Optsubsect}

To prove Corollary \ref{LESHHmaxThm}, it suffices to find
subschemes $Z\subseteq {\bf P}^N$ for which 
$\rho(Z)$ is large. Lemma \ref{postcrit1} suggests where to look.
We want a scheme $Z$ such that 
$\alpha(I(Z))$ is as large as possible, which means that $I(Z)$
should behave generically, from a postulational point of view.
On the other hand, we want $\gamma(Z)$ to be small, so among all $I(Z)$
with generic Hilbert function we want to examine those
for which the Hilbert function of $I(Z)^{(m)}$ is as large as possible
(and hence $\alpha(I(Z)^{(m)})$ is as small as possible).

This problem was studied in \cite{refGMS} in characteristic 0
in the case that $N=m=2$ with $Z=p_1+\cdots+p_n$ 
a reduced set of points $p_i$; i.e., double points in the plane.
They prove that the the set of singular points of a union of
$s$ general lines (i.e., the pair-wise intersections of $s$ general
lines) is a configuration of points in the plane
having generic Hilbert function but for which 
the Hilbert function of the symbolic square of the ideal is as 
large as possible. This suggests, more generally, to look at the set of 
$N$-wise intersections of $s\ge N+1$ general hyperplanes in ${\bf P}^N$.
More generally yet, for $1\le e\le N$ and $s\ge e$, 
let $S_N(e, s, {\bf d})$ denote the
reduced scheme consisting of the $e$-wise intersections of
$s$ general hypersurfaces $H_1,\ldots,H_s$ in ${\bf P}^N$
of respective degrees $d_i$ where ${\bf d} = (d_1,\ldots,d_s)$. 
If $d_i=d$ for all $i$, we will write $S_N(e, s, d)$ for $S_N(e, s, {\bf d})$.
If $d=1$, we will write simply $S_N(e, s)$.
Thus $S_N(N,N+1)$ can be taken to be 
the set of coordinate vertices of ${\bf P}^N$,
and $S_N(1,N+1)$ to be the union of the coordinate hyperplanes.
In this notation, the examples of Ein having large $\rho$
are the codimension $e$ skeleta $S_N(e,N+1)$ 
of the coordinate simplex
in ${\bf P}^N$ (hence $d_i=1$ for all $i$); i.e., 
the $e$-wise intersections of $s=N+1$ general hyperplanes
in ${\bf P}^N$. The case $e=N$ (i.e., of the coordinate vertices
in ${\bf P}^N$) is treated by Arsie and Vatne (see Theorem 4.5
of \cite{refAV}).

It is easy to see that a general hyperplane section 
$H\cap S_N(e, s,{\bf d})$ is $S_{N-1}(e, s, {\bf d})$, defined
by the $e$-wise intersections of the hypersurfaces $H\cap H_i\subseteq H$.
We will denote $\alpha(I(mS_N(e,s,{\bf d})))$  by $\alpha_N(m,e,s,{\bf d})$, where
$mS_N(e,s,{\bf d})\subseteq {\bf P}^N$ is the subscheme consisting of the $e$-wise intersections of the 
$s$ hypersurfaces $H_i$, where each $e$-wise intersection is taken with multiplicity $m$.

In order to apply our bounds to $S_N(e, s,{\bf d})$, we need to determine
the least degree among hypersurfaces that vanish on $mS_N(e, s, {\bf d})$.

\begin{lem}\label{alphaLemma} Let $1\le e\le N$, $s\ge e$,
and let ${\bf d}=(d_1,d_2,\ldots,d_s)$. Let 
$I=I(mS_N(e, s, {\bf d}))\subseteq k[{\bf P}^N]$.
If $m=re$ for some $r$, then $r(d_1+\cdots+d_s)\ge \alpha(I)$.
If $d_1=\cdots=d_s=1$, then for any $m\ge1$ we have 
$ms/e\le \alpha(I)$, and hence we have equality if 
$m=re$.
\end{lem}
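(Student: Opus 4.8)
The plan is to prove the lower bound $ms/e\le\alpha(I(mS_N(e,s)))$ for all $N\ge e\ge 1$, $s\ge e$ and $m\ge 1$ by a double induction on the pair $(e,m)$ ordered lexicographically, and then to deduce the last sentence of the lemma by combining this with the first sentence: when $m=re$ and all $d_i=1$ the first part gives $\alpha(I)\le r(d_1+\cdots+d_s)=rs=ms/e$, so the two inequalities force $\alpha(I)=ms/e$. First I would fix notation: let $F_1,\dots,F_s$ be linear forms defining the general hyperplanes $L_1,\dots,L_s$, and for an $e$-element subset $J\subseteq\{1,\dots,s\}$ let $\Lambda_J=\bigcap_{i\in J}L_i$, so that $I(mS_N(e,s))=\bigcap_J I(\Lambda_J)^m$ (symbolic and ordinary powers of the ideal of a linear subspace agree). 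Then I would fix a nonzero $F$ of degree $t$ in this ideal and show $t\ge ms/e$.

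The base case $e=1$ is immediate: there $I(mS_N(1,s))$ is the ideal of the divisor $m(L_1+\cdots+L_s)$, so $(F_1\cdots F_s)^m\mid F$ and $t\ge ms$. For $e\ge 2$ I would split on divisibility. \emph{If $F_i\nmid F$ for some $i$}, restrict $F$ to the hyperplane $L_i\cong{\bf P}^{N-1}$: then $F|_{L_i}$ is nonzero of degree $t$, the traces $L_j\cap L_i$ ($j\ne i$) are $s-1$ general hyperplanes of $L_i$ whose $(e-1)$-wise intersections are exactly the spaces $\Lambda_J$ with $i\in J$, and restricting each containment $F\in I(\Lambda_J)^m$ to $L_i$ shows $F|_{L_i}\in I(mS_{N-1}(e-1,s-1))$; the inductive hypothesis then gives $t\ge m(s-1)/(e-1)$, which is $\ge ms/e$ precisely because $s\ge e$. \emph{If instead $F_i\mid F$ for all $i$}, then $(F_1\cdots F_s)\mid F$, so $F=(F_1\cdots F_s)G$ with $\deg G=t-s$; since ${\rm ord}_{\Lambda_J}(-)$ is additive on products and ${\rm ord}_{\Lambda_J}(F_1\cdots F_s)=e$ (the $e$ forms $F_i$ with $i\in J$ each vanish to order $1$ along $\Lambda_J$, the others to order $0$), one gets ${\rm ord}_{\Lambda_J}(G)\ge m-e$. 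If $m\le e$ this already suffices, since $t\ge s\ge ms/e$; if $m>e$ then $G\in I((m-e)S_N(e,s))$, and the induction on $m$ gives $t-s\ge(m-e)s/e$, i.e.\ $t\ge ms/e$.

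The step I expect to be the main obstacle is the first case: one must check carefully that cutting by one of the \emph{defining} hyperplanes $L_i$ really does carry $mS_N(e,s)$ into $mS_{N-1}(e-1,s-1)$ --- both that the components $\Lambda_J$ with $i\in J$ are exactly the $(e-1)$-wise intersections of the restricted hyperplanes, and that restriction to $L_i$ does not lower the vanishing order along them --- and that the numerical comparison $m(s-1)/(e-1)\ge ms/e$ is exactly where the hypothesis $s\ge e$ enters. The remaining ingredients (additivity of ${\rm ord}_{\Lambda_J}$ on products, which holds because $I(\Lambda_J)$ is prime with polynomial, hence domain, associated graded ring; and symbolic $=$ ordinary powers for ideals of linear subspaces) are standard, and the rest is the bookkeeping of the two divisibility cases.
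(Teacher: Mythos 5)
Your proposal is correct and is essentially the paper's own argument: the paper likewise restricts a putative low-degree form to one of the defining hyperplanes, using the same comparison $ms/e\le m(s-1)/(e-1)$ together with induction (phrased there as induction on dimension after first cutting down from ${\bf P}^N$ to ${\bf P}^e$ by general hyperplane sections), and when the form vanishes on all the hyperplanes it divides out $F_1\cdots F_s$ to drop the multiplicity by $e$ and the degree by $s$; your lexicographic induction on $(e,m)$ simply packages these same two moves directly in ${\bf P}^N$ without the contradiction/descent. Just be sure to record the one-line proof of the first sentence of the lemma --- $(G_1\cdots G_s)^r$ lies in $I(reS_N(e,s,{\bf d}))$ because each component of the skeleton lies on exactly $e$ of the hypersurfaces --- which is exactly the order computation you already make in your Case 2.
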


\begin{proof}
First consider the case that $m=re$ is a multiple of $e$.
Then the divisor $r(H_1+\cdots+H_s)$ has degree 
$r(d_1+\cdots+d_s)=m(d_1+\cdots+d_s)/e$ and vanishes 
on each component of $S_N(e,s, {\bf d})$ with multiplicity 
$m$ (since each component of $S_N(e,s, {\bf d})$ is contained
in exactly $e$ of the hypersurfaces $H_i$). Thus 
$r(d_1+\cdots+d_s)\ge \alpha_N(m,e,s,{\bf d})$. 

Now assume $d_1=\cdots=d_s=1$.
To show $\alpha_N(m,e,s,{\bf d})\ge ms/e$, it is enough to show
$\alpha_e(m,e,s,{\bf d})\ge ms/e$, since by taking 
general hyperplane sections we have:\newline
$\alpha_N(m,e,s,{\bf d})\ge \alpha_{N-1}(m,e,s,{\bf d})
\ge \cdots \ge \alpha_{e}(m,e,s,{\bf d})$.

Suppose it were true that
$\alpha_{e}(m,e,s,{\bf d})< ms/e$ for some $m$. Let $F$ be a form of 
degree $d=\alpha_{e}(m,e,s,{\bf d})$
vanishing with multiplicity at least $m$ at each point of 
$S_e(e,s, {\bf d})$.
Then $F$ restricts to give a form on $H_1$
with $d < ms/e \le m(s-1)/(e-1)$, but $H_1\cap S_{e}(e,s,{\bf d})
=S_{e-1}(e-1,s-1,{\bf d}')$, where ${\bf d}'=(d_2,\cdots,d_s)$,
and, by induction on the dimension (where dimension 1 is easy),
we have $m(s-1)/(e-1)\le \alpha_{e-1}(m,e-1,s-1,{\bf d}')$.
Hence $F$ vanishes identically on $H_1$. By symmetry,
$F$ vanishes on all of the hyperplanes $H_i$. Dividing out by the
linear forms defining the hyperplanes gives a form $F'$ of degree
$d-s$ vanishing with multiplicity $m-e$ at each point of 
$S_{e}(e,s,{\bf d})$, and hence 
$\alpha_{e}(m-e,e,s,{\bf d})\le d-s < ms/e-s = (m-e)s/e$,
hence again $F'$ vanishes on all $H_i$. Continuing in this way,
we eventually obtain a form of degree less than $s$ 
that vanishes on the $s$ hyperplanes $H_i$, which is a contradiction
unless $F=0$.
\end{proof}

We still need to know $\alpha(I(S_N(e, s, {\bf d}))$.

\begin{lem}\label{regLem} Let $1\le e\le N$, $e\le s$
and $d_1\le d_2\le \cdots\le d_s$.
For $S=S_N(e,s,{\bf d})$ we have $\alpha(I(S)) = d_1+\cdots+d_{s-e+1}$.
If $e=N$ and $d_i=1$ for all $i$, we have 
$\alpha(I(S)) =\sigma(I(S)) = s-N+1$.
\end{lem}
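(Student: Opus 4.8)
The plan is to prove the two assertions separately, handling the general statement $\alpha(I(S)) = d_1+\cdots+d_{s-e+1}$ first, and then extracting the special case $e=N$, $d_i=1$ together with the extra claim $\sigma(I(S)) = s-N+1$. Throughout I assume the $d_i$ are ordered $d_1\le\cdots\le d_s$, and I set $\delta = d_1+\cdots+d_{s-e+1}$.

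For the upper bound $\alpha(I(S))\le\delta$, I would write down an explicit hypersurface of degree $\delta$ vanishing on $S=S_N(e,s,{\bf d})$: since every component of $S$ is an $e$-wise intersection $H_{i_1}\cap\cdots\cap H_{i_e}$, it lies on at least $s-e+1$ of the hypersurfaces among any choice that omits at most $e-1$ indices; concretely, the product $F_1\cdots F_{s-e+1}$ (where $F_j$ is the defining form of $H_j$) has degree $d_1+\cdots+d_{s-e+1}=\delta$ and vanishes on every component, because each component, being contained in $e$ of the $H_i$, is disjoint from at most $e-1$ of $H_1,\dots,H_{s-e+1}$ and hence lies on at least one of them. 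That gives $\alpha(I(S))\le\delta$. For the lower bound I would run an induction on the dimension $N$ and on $s$, imitating the restriction-to-$H_1$ argument already used in the proof of Lemma \ref{alphaLemma}: a form $F$ of degree $<\delta$ vanishing on $S$ restricts to $H_1$; by the general-hyperplane-section fact $H_1\cap S_N(e,s,{\bf d}) = S_{N-1}(e,s,{\bf d})$ (and, when we peel off a hypersurface entirely, to $S_{N-1}(e-1,s-1,{\bf d}')$), the restriction has too small a degree to be nonzero by the inductive hypothesis, so $F$ is divisible by $F_1$; dividing out and repeating forces $F=0$. The base cases ($e=1$, where $S$ is a union of $s$ hypersurfaces and $\alpha = d_1$; or $s=e$, where $S$ is a single linear space and $\alpha=d_1$) are immediate.

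For the special case $e=N$, $d_i=1$: here $\delta = d_1+\cdots+d_{s-N+1} = s-N+1$, so the general formula gives $\alpha(I(S)) = s-N+1$ at once. It remains to show $\sigma(I(S)) = s-N+1$ as well, i.e. that the Hilbert function of $I(S)$ equals its Hilbert polynomial already in degree $\tau = s-N$. Since $S$ is a reduced set of points, I would argue that $S$ imposes independent conditions on forms of every degree $\ge s-N$; equivalently, the $n = \binom{s}{N}$ points of $S$ (the $N$-wise intersections of $s$ general hyperplanes) have generic Hilbert function. One clean way is to note that $S_N(N,s)$ is obtained by adding one general hyperplane at a time to $S_N(N,s-1)$, each new hyperplane contributing $\binom{s-1}{N-1}$ new points lying on it; restricting to that hyperplane and using the inductive description $H\cap S_N(N,s) = S_{N-1}(N-1,s-1) $ together with a Castelnuovo-type exact-sequence/regularity bookkeeping shows the Hilbert function reaches the polynomial in degree $s-N$. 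Alternatively one can appeal to the fact that for points with generic Hilbert function whose first difference of the Hilbert function is of "staircase" type, $\sigma$ equals the degree in which the ideal begins, which is exactly $\alpha(I(S)) = s-N+1$; combined with $\alpha\le\sigma$ this pins down $\sigma = s-N+1$.

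The main obstacle I anticipate is the lower bound in the general formula — specifically, making the double induction (on $N$ and on $s$, with the multidegree ${\bf d}$ shrinking) bookkeep correctly when a form vanishes on some but not all of the hypersurfaces, so that the degree estimate $d - d_1 < d_2+\cdots+d_{s-e+1}$ really does feed the inductive hypothesis for $S_{N-1}(e-1,s-1,{\bf d}')$. The secondary obstacle is the regularity claim $\sigma(I(S)) = s-N+1$: one must verify genericity of the Hilbert function of these particular configurations of points, which is a real (though standard) input; the restriction-to-a-hyperplane induction using $H\cap S_N(N,s)=S_{N-1}(N-1,s-1)$ is the route I would take, and it is the step most likely to need care with base cases and with the claim that the new points on each added hyperplane are themselves in "generic position" relative to that hyperplane.
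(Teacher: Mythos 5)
Your upper bound is the paper's (the product $F_1\cdots F_{s-e+1}$ vanishes on every component since each $e$-wise intersection must involve one of $H_1,\ldots,H_{s-e+1}$), but your lower-bound plan has a real gap exactly where you flag it, and the gap is not just bookkeeping. The restriction-to-$H_1$/divide-out induction does go through when $d_1=\cdots=d_s=1$ (the trace of $S$ on $H_1\cong{\bf P}^{N-1}$ is $S_{N-1}(e-1,s-1)$ with $\alpha=s-e+1>\deg F$, so $F_1\mid F$, and $F/F_1$ vanishes on $S_N(e,s-1)$, closing the induction). But the lemma is stated, and used in Theorem \ref{skeletonThm}(c), for arbitrary nondecreasing ${\bf d}$, and when $d_1>1$ the scheme $H_1$ is not a projective space: the components of $S$ lying on $H_1$ do not form a configuration $S_{N-1}(e-1,s-1,{\bf d}')$ in any ${\bf P}^{N-1}$, so the inductive hypothesis you want to feed simply does not apply. (Also note that the identity $H\cap S_N(e,s,{\bf d})=S_{N-1}(e,s,{\bf d})$ is for a \emph{general} hyperplane $H$, not for $H_1$, and your stated base case for $e=1$ is wrong: there $I(S)=(F_1\cdots F_s)$, so $\alpha=d_1+\cdots+d_s$, not $d_1$.) The paper's missing ingredient is to avoid restriction entirely: if $F$ has degree $d<d_1+\cdots+d_{s-e+1}$ and vanishes on the $e$-skeleton, then on each component $H_{i_1}\cap\cdots\cap H_{i_{e-1}}$ of the $(e-1)$-skeleton (irreducible by Bertini, since the $H_i$ are general) either the component lies in the zero locus of $F$, or $F$ cuts a divisor of degree $d\,d_{i_1}\cdots d_{i_{e-1}}$ containing the union of the traces of all the remaining $H_j$, whose degree is $d_{i_1}\cdots d_{i_{e-1}}\sum_j d_j\ge d_{i_1}\cdots d_{i_{e-1}}(d_1+\cdots+d_{s-e+1})>d\,d_{i_1}\cdots d_{i_{e-1}}$, a contradiction; so $F$ vanishes on the $(e-1)$-skeleton, and descending through the skeleta forces $F=0$. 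Some argument of this Bezout-on-components type (or an inductive statement formulated for skeleta on the hypersurface $H_1$ itself) is needed; as written, your induction cannot be run for general ${\bf d}$.

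The claim $\sigma(I(S))=s-N+1$ also is not established by your sketch: the ``add one hyperplane at a time plus Castelnuovo exact sequences'' route is left entirely to be done, and your alternative appeal to points ``with generic Hilbert function'' assumes the very genericity that has to be proved. In fact no such machinery is needed once $\alpha(I(S))=s-N+1$ is known: there are $\binom{s}{N}$ points and $\binom{(s-N)+N}{N}=\binom{s}{N}$ monomials of degree $s-N$, and since no nonzero form of degree $s-N$ vanishes on all the points, the evaluation map from forms of degree $s-N$ to $k^{\binom{s}{N}}$ is injective between spaces of equal dimension, hence surjective; so the points impose independent conditions in degree $s-N$, hence in every larger degree, giving $\tau(I)=s-N$ and $\sigma(I)=s-N+1$. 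This numerical coincidence is the paper's one-line argument and is the observation your proposal misses.
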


\begin{proof} Clearly, $\alpha(I(S)) \le d_1+\cdots+d_{s-e+1}$,
since every intersection of $e$ of the hypersurfaces must involve
one of the hypersurfaces $H_1,\ldots,H_{s-e+1}$.
For the rest, let us refer to the union of the $e$-wise intersections
of the hypersurfaces $H_i$ as the codimension $e$ skeleton of the $H_i$,
or just the $e$-skeleton. We will now show that
any hypersurface $H$ of degree $d<d_1+\cdots+d_{s-e+1}$ which vanishes
on the $e$-skeleton also vanishes on the $(e-1)$-skeleton.
Since $d<d_1+\cdots+d_{s-e+1}\le d_1+\cdots+d_{s-(e-1)+1}$, 
this means that $H$ also vanishes on the $(e-1)$-skeleton, and so on,
and thus vanishes on the 1-skeleton and indeed the 0-skeleton 
(i.e., the whole space, since a form of degree $d$
cannot contain hypersurfaces whose degrees sum to more than $d$). 
Thus $H$ is 0, and this shows 
$\alpha(I(S)) \ge d_1+\cdots+d_{s-e+1}$ which gives equality.

So suppose $H$ has degree $d<d_1+\cdots+d_{s-e+1}$ and vanishes
on the $e$-skeleton. Thus for any indices $i_1<\cdots<i_{e-1}$ and any $j$
not one of these indices,
$H$ vanishes on $H_{i_1}\cap \cdots\cap H_{i_{e-1}}\cap H_j$.
By Bertini (Theorem II.8.18 of \cite{refHt}, taking hyperplane sections
after uple embeddings), intersections of general hypersurfaces
are smooth and, in dimension 2 or more, irreducible. Thus 
$H_{i_1}\cap \cdots\cap H_{i_{e-1}}$ is irreducible.
If it were not already contained in $H$, we can
intersect with $H$ and do a degree calculation:
$H\cap H_{i_1}\cap \cdots\cap H_{i_{e-1}}$ has degree $dd_{i_1}\cdots d_{i_{e-1}}$
whereas the union of the intersections of $H_{i_1}\cap \cdots\cap H_{i_{e-1}}$ with all
possible $H_j$ (i.e., for all $j$ not among the indices 
$i_1,\ldots,i_{e-1}$), has degree $d_{i_1}\cdots d_{i_{e-1}}\sum_jd_j$, where the sum
is over all $j$ not among $i_1,\ldots,i_{e-1}$.
Clearly $d<d_1+\cdots+d_{s-e+1}\le \sum_jd_j$ since the $d_i$ are assumed to be
nondecreasing. Since the total degree of the intersection
$H\cap H_{i_1}\cap \cdots\cap H_{i_{e-1}}$ is less than the sum of the degrees of the 
divisors contained in the intersection, it follows that 
$H_{i_1}\cap \cdots\cap H_{i_{e-1}}\subseteq H$ for each component
of the $(e-1)$-skeleton, as claimed.

Finally, suppose $e=N$ and $d_i=1$ for all $i$. Then as we have just seen,
$\alpha(S)=s-e+1$. But there are $\binom{s}{e}$ points and 
$\binom{(s-e)+N}{N}=\binom{s}{e}$ forms of degree $s-e$ in $N+1$ variables.
Thus the number of conditions imposed by the points equals the number
of points, hence $\tau(I)=s-e$ so $\sigma(I)=\alpha(I)=s-N+1$.
\end{proof}

We now can obtain some results on $\rho(S_N(e,s))$.
As noted above, Theorem \ref{skeletonThm}(b) in the case $s=N+1$
is due to L. Ein; Theorem 4.5 of \cite{refAV} implies
$2-1/N\le \rho(S_N(N,N+1))$, using as Ein 
did the fact that the ideal is monomial.

\begin{thm}\label{skeletonThm}
Let $1\le e\le N$ and $e\le s$. Then:
\begin{itemize}
\item[(a)] $\rho(S_N(N,s))=N(s-N+1)/s$; and
\item[(b)] $e(s-e+1)/s\le \rho(S_N(e,s))$.
\item[(c)] More generally, given ${\bf d}=(d_1,\ldots,d_s)$
with $d_1\le \cdots\le d_s$,
we have $$e(d_1+\cdots+d_{s-e+1})/(d_1+\cdots+d_s)\le \rho(S_N(e,s,{\bf d})).$$
\end{itemize}
\end{thm}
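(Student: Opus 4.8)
The plan is to deduce all three parts from the two postulational bounds established earlier, namely Lemma~\ref{postcrit1}(b) for the lower bound and Corollary~\ref{PCcor} (via Postulational Criterion~2) for the upper bound, feeding in the $\alpha$-computations of Lemmas~\ref{alphaLemma} and~\ref{regLem}. First I would treat part~(c), which is really the general engine: set $I=I(S_N(e,s,{\bf d}))$ and let $D=d_1+\cdots+d_s$, $D'=d_1+\cdots+d_{s-e+1}$. By Lemma~\ref{regLem} we have $\alpha(I)=D'$, and by the first assertion of Lemma~\ref{alphaLemma}, for $m=re$ we get $\alpha(I^{(m)})=\alpha(I(mS_N(e,s,{\bf d})))\le rD$. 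Hence $\gamma(I)=\lim_{m\to\infty}\alpha(I^{(m)})/m\le \lim_{r\to\infty} rD/(re)=D/e$. Plugging into Lemma~\ref{postcrit1}(b), $\alpha(I)/\gamma(I)\ge D'/(D/e)=eD'/D$, and since $\alpha(I)/\gamma(I)\le\rho(I)$ this gives $eD'/D\le\rho(S_N(e,s,{\bf d}))$, which is exactly~(c).

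Part~(b) is then the special case ${\bf d}=(1,\ldots,1)$ of part~(c): there $D'=s-e+1$ and $D=s$, so $e(s-e+1)/s\le\rho(S_N(e,s))$. (Here Lemma~\ref{regLem} gives $\alpha=s-e+1$ directly, and Lemma~\ref{alphaLemma} even gives the exact value $\alpha(I^{(m)})=ms/e$ when $e\mid m$, confirming $\gamma(I)=s/e$, though for~(b) only the inequality $\gamma\le s/e$ is needed.)

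For part~(a), i.e.\ $e=N$ with $d_i=1$ for all $i$, I would establish the matching upper bound $\rho(S_N(N,s))\le N(s-N+1)/s$. Write $I=I(S_N(N,s))$, a saturated ideal of a zero-dimensional subscheme (the $\binom sN$ coordinate-type points). By the last sentence of Lemma~\ref{regLem}, $\alpha(I)=\sigma(I)=s-N+1$, so $I$ satisfies the hypothesis $\alpha(I)=\sigma(I)$ of Corollary~\ref{Bndscor} — wait, that corollary is stated for fat point subschemes and yields $\rho(I)=\alpha(I)/\gamma(I)$ once we know ${\rm reg}(I)=\sigma(I)$, which holds here since $I$ is saturated and zero-dimensional. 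Thus it remains to pin down $\gamma(I)$ exactly. The lower bound $\gamma(I)\ge s/N$ comes from Lemma~\ref{postcrit1}(b) combined with the exact value $\alpha(I)/\gamma(I)\le\rho(I)$ being forced to equality — more cleanly, $\gamma(I)\le \alpha(I^{(m)})/m = s/N$ for $m$ a multiple of $N$ by Remark~\ref{SCrem} and the computation in Lemma~\ref{alphaLemma}, while for the reverse inequality I would invoke $\alpha(I^{(m)})\ge ms/N$ for all $m\ge 1$ (the second assertion of Lemma~\ref{alphaLemma} with $e=N$), giving $\gamma(I)=\lim \alpha(I^{(m)})/m\ge s/N$. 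Hence $\gamma(I)=s/N$, and so $\rho(S_N(N,s))=\alpha(I)/\gamma(I)=(s-N+1)/(s/N)=N(s-N+1)/s$, proving~(a).

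The main obstacle is making the upper bound in~(a) airtight: one must be sure that $\alpha(I^{(m)})\ge ms/N$ holds for \emph{every} $m\ge 1$, not merely for multiples of $N$, so that $c=s/N$ is a legitimate choice in Corollary~\ref{PCcor} (equivalently, in Remark~\ref{SCrem} / Corollary~\ref{Bndscor}); this is precisely the content of the second half of Lemma~\ref{alphaLemma} in the case $d_i=1$, and I would lean on it directly. The only mild subtlety is confirming ${\rm reg}(I)=\sigma(I)=\alpha(I)$ — immediate from the paragraph in the excerpt on regularity of zero-dimensional saturated ideals together with $\tau(I)=s-N$ from Lemma~\ref{regLem} — so that the upper bound $\rho(I)\le{\rm reg}(I)/\gamma(I)$ of Theorem~\ref{SCthm}(b) matches the lower bound from~(b), yielding equality in~(a). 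Parts~(b) and~(c) present no obstacle beyond bookkeeping, since they are pure applications of Lemma~\ref{postcrit1}(b).
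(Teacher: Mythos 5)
Your proposal is correct and follows essentially the same route as the paper: Lemma \ref{regLem} for $\alpha$ (and $\sigma$ when $e=N$), Lemma \ref{alphaLemma} to bound (and, for $d_i=1$, pin down) $\gamma$, Lemma \ref{postcrit1} for the lower bounds in (b) and (c), and Corollary \ref{Bndscor} for the equality in (a). The momentary confusion about where $\gamma(I)\ge s/N$ comes from is resolved by your own ``more cleanly'' rephrasing, which is exactly the intended use of the second assertion of Lemma \ref{alphaLemma}.
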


\begin{proof} By Lemma \ref{regLem},
$\alpha(I(S_N(e,s)))=s-e+1$, $\sigma(I(S_N(N,s)))=s-N+1$,
and $\alpha(I(S_N(e,s,{\bf d})))=d_1+\cdots+d_{s-e+1}$,
while by Lemma \ref{alphaLemma}, we see that
$$\gamma(I(S_N(e,s)))=\lim_{m\to\infty}\frac{\alpha(I(meS_N(e,s)))}{(me)}=s/e$$
and similarly $\gamma(I(meS_N(e,s,{\bf d})))\le (d_1+\cdots+d_s)/e$.

(a) By Corollary \ref{Bndscor}, we thus have $\rho(S_N(N,s))=N(s-N+1)/s$.

(b) By Lemma \ref{postcrit1} we have $e(s-e+1)/s\le \rho(S_N(e,s))$.

(c) By Lemma \ref{postcrit1} we have $e(d_1+\cdots+d_{s-e+1})/(d_1+\cdots+d_s)
\le \rho(S_N(e,s,{\bf d}))$.
\end{proof}

\subsection{General Facts about $\rho$}

Here we take note of some general behavior of $\rho(I)$.
To state the results, let $R=k[{\bf P}^N]$, let 
$x$ be an indeterminate with respect to which we have
$R\subseteq R[x]=k[{\bf P}^{N+1}]$, and let the quotient
$q: R[x]\to R$ correspond to the inclusion ${\bf P}^N\subseteq {\bf P}^{N+1}$.
If $I\subseteq R$ is a homogeneous ideal, let $I'=IR[x]$ be the extended ideal.
In case $I=I(Z)$ for some subscheme $Z\subseteq {\bf P}^N$, we will denote by $C(Z)$
the subscheme defined by $I'$; we note that $C(Z)$ is 
just the projective cone over $Z$. 

\begin{prop}\label{coneprop} In the notation of the preceeding paragraph, we have:
\begin{itemize}
\item[(a)] $\rho(I)=\rho(I')$, hence $\rho(Z)=\rho(C(Z))$ for any 
nontrivial subscheme $Z\subsetneq {\bf P}^N$;
\item[(b)] $\rho(I) =\rho(q^{-1}(I))$, hence if $I=I(Z)$ for a nontrivial subscheme $Z\subsetneq {\bf P}^N\subseteq {\bf P}^{N+1}$,
then $\rho(Z)$ is well defined, whether we regard $Z$ as being in ${\bf P}^N$ or ${\bf P}^{N+1}$; and
\item[(c)] $\rho(mZ)\le \rho(Z)$ for any fat flat subscheme $Z$.
\end{itemize}
\end{prop}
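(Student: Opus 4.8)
Throughout I would work with the \emph{bad set} $B(J)=\{(m,r):J^{(m)}\not\subseteq J^r\}$ of a nontrivial homogeneous ideal $J$, so that $\rho(J)=\sup\{m/r:(m,r)\in B(J)\}$; each of (a), (b), (c) then becomes a comparison of two bad sets, and the plan is to track how ordinary and symbolic powers transform under the three relevant operations (extension along $R\subseteq R[x]$; preimage along $q$; and $I(Z)\mapsto I(mZ)=I(Z)^{(m)}$ for a fat flat $Z$), with (a) and (c) being essentially formal and (b) carrying the real content. For (a) I would use that $R\subseteq R[x]$ is faithfully flat, so $\mathfrak aR[x]\subseteq\mathfrak bR[x]$ iff $\mathfrak a\subseteq\mathfrak b$; since extension commutes with products, $(I')^r=(I^r)R[x]$, and since $\operatorname{Ass}(I')=\{PR[x]:P\in\operatorname{Ass}(I)\}$ and primary decomposition is stable under this polynomial extension, extension commutes with symbolic powers as well, $(I')^{(m)}=(I^{(m)})R[x]$. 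Hence $I^{(m)}\subseteq I^r\iff(I')^{(m)}\subseteq(I')^r$, i.e.\ $B(I)=B(I')$ and $\rho(I)=\rho(I')$; since $I'=I(C(Z))$ when $I=I(Z)$, this gives $\rho(Z)=\rho(C(Z))$.

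For (b) I would set $J=q^{-1}(I)=IR[x]+(x)$, so $\operatorname{Ass}(J)=\{\widetilde P:=q^{-1}(P):P\in\operatorname{Ass}(I)\}$ (each $\widetilde P$ containing $x$) and $A_J=q^{-1}(A_I)$. One inclusion is immediate: $q(J^r)=I^r$ and $q(J^{(m)})=I^{(m)}$, so $J^{(m)}\subseteq J^r$ forces $I^{(m)}\subseteq I^r$, i.e.\ $\rho(I)\le\rho(J)$. For the reverse I would establish the structural identities
$$J^r=(I^r)R[x]+xJ^{r-1},\qquad J^{(m)}=(I^{(m)})R[x]+xJ^{(m-1)},$$
the first being the expansion of $(IR[x]+(x))^r$, and the second reducing (writing $f=q(f)+xg$ with $q(f)\in q(J^{(m)})=I^{(m)}\subseteq J^{(m)}$) to the colon identity $J^{(m)}:x=J^{(m-1)}$. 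That colon identity is the technical heart: $\supseteq$ follows from $xJ^{(m-1)}\subseteq J^{(1)}J^{(m-1)}\subseteq J^{(m)}$; for $\subseteq$ I would first verify the ordinary-power statement $J^m:x=J^{m-1}$ (valid because $x$ is a nonzerodivisor modulo $(I^m)R[x]$, the quotient being a polynomial ring over $R/I^m$), then localize it at each $\widetilde P\in\operatorname{Ass}(J)$, where $J^{(m)}$ localizes to $J^mR[x]_{\widetilde P}$, so that $J^{(m)}:x$ and $J^{(m-1)}$ agree after localizing at every prime of $\operatorname{Ass}(J)$, and finally conclude globally: an inclusion $\mathfrak b\subseteq\mathfrak c$ of ideals with $\operatorname{Ass}(R[x]/\mathfrak b)\subseteq\operatorname{Ass}(J)$ that becomes an equality after localizing at each prime of $\operatorname{Ass}(J)$ must be an equality.

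Granting the two identities, a descending induction on $r$ shows that $J^{(m)}\not\subseteq J^r$ implies $I^{(m-j)}\not\subseteq I^{r-j}$ for some $0\le j<r$: the base case $r=1$ uses $xJ^{(m-1)}\subseteq J$, so $(I^{(m)})R[x]\not\subseteq J$ and $I^{(m)}\not\subseteq I$; the step splits on whether $(I^{(m)})R[x]\subseteq J^r$, the failing branch giving $I^{(m)}\not\subseteq I^r$ and the other branch forcing $xJ^{(m-1)}\not\subseteq J^r$, hence $J^{(m-1)}\not\subseteq J^{r-1}$, whereupon induction applies. Since $(m-j)/(r-j)\ge m/r$ when $m\ge r$, and $m/r<1\le\rho(I)$ otherwise, every ratio coming from $B(J)$ is $\le\rho(I)$, so $\rho(J)\le\rho(I)$; thus $\rho(I)=\rho(q^{-1}(I))$, and $\rho(Z)$ is well defined independently of the ambient projective space. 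I expect this colon identity to be the main obstacle, since its analogue is false for arbitrary ideals and genuinely uses $\operatorname{Ass}(J^{(m)})\subseteq\operatorname{Ass}(J)$, which is exactly what the paper's definition of $I^{(m)}$ via $R_A$ guarantees.

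For (c), which I would prove independently, let $I=I(Z)$ with $Z$ a fat flat, so $I(mZ)=I^{(m)}$; since $mZ$ is again a fat flat with the same support as $Z$, we have $\operatorname{Ass}(I^{(m)})=\operatorname{Ass}(I)$, and hence $(I^{(m)})^{(k)}=I(k(mZ))=I^{(mk)}$, so $B(I^{(m)})=\{(k,r):I^{(mk)}\not\subseteq(I^{(m)})^r\}$. From $I^m\subseteq I^{(m)}$ we get $I^{mr}=(I^m)^r\subseteq(I^{(m)})^r$, so $I^{(mk)}\not\subseteq(I^{(m)})^r$ forces $I^{(mk)}\not\subseteq I^{mr}$ and therefore $k/r=(mk)/(mr)\le\rho(I)$. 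Taking the supremum gives $\rho(mZ)=\rho(I^{(m)})\le\rho(I)=\rho(Z)$.
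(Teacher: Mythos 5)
Your proposal is correct and follows essentially the same route as the paper: (a) via flatness and extension of primary decompositions, (b) via the decomposition of ordinary and symbolic powers of $q^{-1}(I)=IR[x]+(x)$ into pieces $x^j(I')^{(m-j)}$ (your recursive identities are that decomposition, and your contrapositive induction is the paper's direct piece-by-piece containment read backwards), and (c) via $(I^{(m)})^{(k)}=I^{(km)}$ together with $I^{mr}\subseteq (I^{(m)})^r$. The only real difference is one of detail: the paper asserts the symbolic-power decomposition as a fact, while you prove it through the colon identity $J^{(m)}:x=J^{(m-1)}$ (in that localization step, the associated primes of $J^{(m-1)}$ are only guaranteed to be \emph{contained in} some associated prime of $J$, which still suffices for your globalization argument).
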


\begin{proof} (a) Since $R\to R[x]$ is flat, primary decompositions
of ideals in $R$ extend to primary decompositions in $R[x]$ (see \cite{refMa}, Theorem 13,
or Exercise 7, \cite{refAM}). 
Since $I$ and $I'$ have the same generators,
whenever $I$ and $J$ are ideals in $R$, we have $I\subseteq J$ if and only if
$I'\subseteq J'$. Taken together, this means $I^{(m)}\subseteq I^r$ if and only if
$(I')^{(m)}\subseteq (I')^r$, and hence that $\rho(I)=\rho(I')$.

(b) Note that $q^{-1}(I)=I'+(x)$, and use the facts that $(q^{-1}(I))^r=\sum_i(x^i)(I')^{r-i}$
and $(q^{-1}(I))^{(m)}=\sum_j(x^j)(I')^{(m-j)}$. If $(q^{-1}(I))^{(m)}\subseteq (q^{-1}(I))^{r}$,
setting $x=0$ gives $I^{(m)}\subseteq I^r$, and hence $\rho(q^{-1}(I))\ge \rho(I)$.
And if $m/r>\rho(I)=\rho(I')$, then $(m-j)/(r-j)\ge m/r$ for $0\le j<r$, so
$x^j(I')^{(m-j)}\subseteq x^j(I')^{r-j}$ hence $(q^{-1}(I))^{(m)}\subseteq (q^{-1}(I))^r$,
so $\rho(I)\ge \rho(q^{-1}(I))$.

(c) By definition we can find a ratio $s/r <  \rho(I^{(m)})$
arbitrarily close to $\rho(I^{(m)})$ such that $(I^{(m)})^r$ does not contain
$I^{(sm)}$, hence $I^{rm}$ does not contain
$I^{(sm)}$, so $sm/(sr) <  \rho(I)$.
\end{proof}

Equality in Proposition \ref{coneprop}(c) can fail. 
For example, if $Z$ is the reduced union of three general points in ${\bf P}^2$,
then $\rho(mZ)= 1$ if $m$ is even, while $\rho(mZ)=(3m+1)/(3m)$ if $m$ is odd
\cite{refBH2}.

\section{Proofs}\label{Proofs}

\begin{proof}[Proof of Corollary \ref{LESHHmaxThm}]
The result of \cite{refHH1} shows that
$\rho(N,e)\le e$, while taking the limit as $s\to\infty$
in Theorem \ref{skeletonThm}(b) shows $e\le \rho(N,e)$.
Alternatively, using Theorem \ref{skeletonThm}(a),
$i$ applications of Proposition \ref{coneprop}(a),
and then taking the limit for $s\to \infty$, we conclude
$\rho(N+i,N)=N$ for all $N$ and $i$.
\end{proof}

\begin{proof}[Proof of Theorem \ref{SCthm}]
The upper bound is an immediate
consequence of Corollary \ref{PCcor} and Remark \ref{SCrem}. The lower bound is Lemma \ref{postcrit1}.
\end{proof}

\begin{proof}[Proof of Corollary \ref{SCcor}]
Since the points are generic and the number of points is the binomial coefficient
$n=\binom{s+N}{N}$, we know $\alpha(I)=\sigma(I)=s+1$.
Also, by Lemma \ref{subadd} we know $\gamma(I)=n(\varepsilon(N,Z))^{N-1}$.
Thus the result follows immediately from Theorem \ref{SCthm}.
When $N=2$ and $n$ is a square, we know in addition that $n\varepsilon(2,n)=\sqrt{n}$.
The fact that we can also write $\frac{s+1}{\sqrt{n}}=\sqrt{2}\sqrt{\frac{s+1}{s+2}}$
follows from $n=\frac{(s+1)(s+2)}{2}$. 
\end{proof}

\renewcommand{\thethm}{\thesection.\arabic{thm}}
\setcounter{thm}{0}

\section{Additional Examples, Comments and Questions}\label{AECQ}

The inspiration for this paper was a question Huneke asked
the second author: if $S$ is a finite set of points in ${\bf P}^2$
with $I=I(S)$, is it true that $I^{(3)}\subseteq I^2$?

We cannot yet answer this question, but 
we can show Huneke's question
has an affirmative answer in many cases.
(Theorems 3.3 and 4.3 of \cite{refTY} give additional cases in which
Huneke's question has an affirmative answer.)

\begin{thm}\label{8}
Let $I=I(S)$, where $S$ is a set of n generic points of ${\bf P}^2$.
Then $I^2$ contains $I^{(3)}$ for every $n \geq 1$.
\end{thm}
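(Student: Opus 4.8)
The plan is to reduce the statement to a postulational containment statement to which Postulational Criterion 2 (Lemma~\ref{postcrit2}) applies. Fix $n$ and let $I=I(S)$ for $n$ generic points of $\mathbf{P}^2$. Since $I$ defines a $0$-dimensional subscheme and is saturated, by Lemma~\ref{postcrit2} the containment $I^{(3)}\subseteq I^2$ will follow as soon as we know $2\,\mathrm{reg}(I)\le \alpha(I^{(3)})$, i.e.\ $2\sigma(I)\le \alpha(I^{(3)})$. Because the points are generic, $\alpha(I)=\sigma(I)$ is the integer $\alpha$ characterized by $\binom{\alpha+1}{2}\le n<\binom{\alpha+2}{2}$ (strictly, when $n$ is not exactly a binomial coefficient the situation is a bit more delicate, so one should treat the ``perfect'' case $n=\binom{s+2}{2}$ separately from the general case, using that $\alpha(I)$ and $\sigma(I)$ differ by at most $1$ for generic points). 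So the heart of the matter is a lower bound on $\alpha(I^{(3)})=\alpha_2(3,S)$, the least degree of a plane curve with a triple point at each of $n$ generic points, of the form $\alpha(I^{(3)})\ge 2\sigma(I)$, possibly with small exceptional values of $n$ handled by hand.

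The key input is therefore a sufficiently strong lower bound on $\alpha$ of the triple-point scheme at $n$ generic points, equivalently an upper bound on the dimension of the space of such curves; this is exactly where the genericity is used. I would invoke the Alexander--Hirschowitz theorem (or the relevant special case for triple points in $\mathbf{P}^2$), which computes the Hilbert function of $I(3S)$ for generic $S$ and lists the finitely many exceptional $(n,t)$; concretely, the expected dimension $\max\{0,\binom{t+2}{2}-6n\}$ of $(I^{(3)})_t$ is attained except in a short explicit list. From this one extracts $\alpha(I^{(3)})$: it is essentially the least $t$ with $\binom{t+2}{2}>6n$, adjusted on the exceptional list. Comparing this with $2\sigma(I)\approx 2\sqrt{2n}$ gives the inequality asymptotically with room to spare (since $6n$ grows faster than what a degree-$2\sqrt{2n}$ curve can absorb in triple points by a constant factor $>1$), and then one checks the finitely many small $n$ and the Alexander--Hirschowitz exceptional cases directly.

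The main obstacle I expect is the bookkeeping at small $n$ and at the fencepost values where $n$ sits just below or just above a binomial coefficient: there $\sigma(I)$ can be one larger than the ``square-root'' estimate suggests while $\alpha(I^{(3)})$ can simultaneously be one smaller than expected (an Alexander--Hirschowitz defect), so the clean inequality $2\sigma(I)\le \alpha(I^{(3)})$ could be tight or could momentarily fail for a Postulational-Criterion-2 argument, forcing one either to use a sharper containment criterion or to verify those finitely many cases by an explicit construction of a degree-$2\sigma(I)$ curve singular enough to witness $I^{(3)}\subseteq I^2$. A clean way to organize this is: (1) dispose of $n\le 3$ (and perhaps a few more) by direct elementary arguments; (2) for $n\ge 4$ reduce to the numerical inequality via Lemma~\ref{postcrit2}; (3) prove the numerical inequality using Alexander--Hirschowitz, treating its exceptional list separately; (4) in each remaining exceptional case, exhibit the needed curve explicitly, using semicontinuity to pass from a special configuration to the generic one.
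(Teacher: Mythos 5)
Your proposal follows essentially the same route as the paper: reduce via Postulational Criterion~2 (Lemma~\ref{postcrit2}) to the inequality $2\sigma(I)\le\alpha(I^{(3)})$, use genericity to read off $\sigma(I)$ from $\binom{t}{2}<n$, use the Hirschowitz--Mignon (Alexander--Hirschowitz type) statement that $n\ge 10$ generic triple points impose independent conditions to get $\alpha(I^{(3)})$ as the least $d$ with $\binom{d+2}{2}>6n$, and settle the remaining small $n$ by known Hilbert functions. The only detail you leave vague is the handful of cases where the numerical criterion fails or is not needed ($n=1,2,4$), which the paper dispatches by noting $S$ is then a complete intersection, so $I^{(3)}=I^3\subseteq I^2$; your ``direct elementary arguments'' should be made concrete in exactly this way rather than by constructing a single singular curve.
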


\begin{proof} Since for $n=1$, $2$, or $4$, $S$ is a complete intersection
and hence $I^3 = I^{(3)}$, the theorem is true in those cases,
so assume $n$ is not $1$, $2$ or $4$.

If $2\sigma(I) \leq \alpha(I^{(3)})$, then $I^2$ contains $I^{(3)}$
by Lemma \ref{postcrit2}.
Since the points are generic and of multiplicity $1$,
they impose independent conditions in degrees $\alpha(I)$ or more, so $\sigma(I)$ is the
largest $t$ such that $\binom{t}{2} < n$.
Also, the Hilbert functions of ideals of fat point
subschemes supported at 9 or fewer generic points
are known (see, e.g., \cite{refH2}) so we can compute
$\alpha(I^{(3)})$ exactly. Here's what happens for $n \leq 9$:
\[
\begin{array}{c|c|c}
n	& \sigma(I) &	\alpha(I^{(3)})\\
\hline 
3 & 2 &  5\\
5 & 3 & 6 \\ 
6 & 3 & 8\\
7 & 4 & 8\\
8 & 4 & 9\\
9 & 4 & 9
\end{array}
\]

We see that $2\sigma(I) \leq \alpha(I^{(3)})$, hence $I^2$ contains $I^{(3)}$.

Now assume $n \geq 10$. Let $t = \sigma(I)$; then $\binom{t}{2} < n$
so $t^2-t < 2n$. (Also note that since $n \geq 10$, we must have $t \geq 4$.)
It is known (see \cite{refHi} or \cite{refM}) that $n\ge 10$ points of multiplicity $3$
impose independent conditions on forms of degree at least $\alpha(I^{(3)})$, 
so $\alpha(I^{(3)})$ is the least $d$ such that $\binom{d+2}{2} > 6n$.
Thus to show $2t \leq d$, it is enough to show that $\binom{2t+1}{2} \leq 6n$,
which we will do using the fact that $t^2-t < 2n$ and hence
$3t^2-3t < 6n$. In fact, all we need do now is verify that
$\binom{2t+1}{2} \leq 3t^2-3t$, which is easy, keeping in mind that
$t \geq 4$.
\end{proof}

We can also give an affirmative answer to a stronger
version of Huneke's question in the case of $n$ generic points.

\begin{thm}\label{genThm} Let $I=I(S_n)$, where $S_n$ is a 
set of n generic points of ${\bf P}^2$.
Then $I^r$ contains $I^{(m)}$ whenever $m/r > 3/2$.
\end{thm}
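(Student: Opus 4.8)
The plan is to apply the asymptotic containment machinery developed in Section~\ref{prelims}, specialized to the generic points case where the relevant invariants are all known. Recall that for $S_n$ a set of $n$ generic points in ${\bf P}^2$ with $I=I(S_n)$, we have $\alpha(I)=\sigma(I)={\rm reg}(I)=\omega(I)$, since generic points impose independent conditions and the ideal is generated in a single degree (up to the usual small exceptions). The key numerical input is a bound on $\gamma(I)$: since $n$ generic points each of multiplicity $m$ impose at most $\binom{m+1}{2}n$ conditions, any form of degree $d$ with $\binom{d+2}{2}>\binom{m+1}{2}n$ vanishing to order $m$ at each point must be zero, so $\alpha(I^{(m)})$ grows like $m\sqrt{n}$; more precisely $\alpha(I^{(m)})/m$ is bounded below, and in fact $\gamma(I)\ge \sqrt{n}$ is false in general (Nagata), but what we actually need is an \emph{upper} bound relating $\alpha(I)$ and $\gamma(I)$, namely $\alpha(I)/\gamma(I)<3/2$, equivalently $\gamma(I)>\tfrac23\alpha(I)$.

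The strategy: first reduce to $n\gg 0$, handling small $n$ (the complete intersection cases $n=1,2,4$, and the remaining $n\le 9$) by direct computation as in the proof of Theorem~\ref{8}, using known Hilbert functions of triple (more generally $m$-fold) point schemes on $\le 9$ generic points. For $n\ge 10$, I would invoke the regularity estimate ${\rm reg}(I^r)\le r\omega(I)+2({\rm reg}(I)-\omega(I))=r\omega(I)$ (since ${\rm reg}(I)=\omega(I)$ here) combined with Corollary~\ref{asympcor}, or more directly Postulational Criterion~2 (Lemma~\ref{postcrit2}): $I^{(m)}\subseteq I^r$ whenever $r\,{\rm reg}(I)\le \alpha(I^{(m)})$. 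With $t=\sigma(I)={\rm reg}(I)$ satisfying $\binom{t}{2}<n$, i.e.\ $t^2-t<2n$, and with $\alpha(I^{(m)})$ the least $d$ with $\binom{d+2}{2}>\binom{m+1}{2}n=\tfrac{m(m+1)}{2}n$ (valid since $n\ge 10$ points of multiplicity $m$ impose independent conditions on forms of the relevant degree), the condition $r\,{\rm reg}(I)\le\alpha(I^{(m)})$ reduces to a clean inequality. It suffices to show $\binom{rt+1}{2}\le \tfrac{m(m+1)}{2}n$, and using $t^2-t<2n$ one has $\tfrac{m(m+1)}{2}n > \tfrac{m(m+1)}{4}(t^2-t)$, so it is enough that $\binom{rt+1}{2}=\tfrac{rt(rt+1)}{2}\le \tfrac{m(m+1)}{4}(t^2-t)$, i.e.\ $\tfrac{2rt(rt+1)}{t(t-1)}\le m(m+1)$. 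When $m/r>3/2$, writing $m\ge \tfrac32 r$ (and using $r\ge1$, $t\ge4$), the left side is at most roughly $2r^2t/(t-1)\cdot(1+1/(rt))$ which is comfortably below $m(m+1)\ge \tfrac32 r(\tfrac32 r+1)$; this is the elementary verification that needs to be carried out carefully, tracking the boundary case $m=\lceil 3r/2\rceil$.

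The main obstacle I anticipate is not the asymptotic argument itself but pinning down the \emph{exact} threshold behavior for $m/r$ just above $3/2$: since the hypothesis is the strict inequality $m/r>3/2$, the worst case is $m$ the smallest integer exceeding $3r/2$, which for even $r$ is $\tfrac{3r}{2}+1$ and for odd $r$ is $\tfrac{3r+1}{2}$, and one must check the inequality $\binom{rt+1}{2}\le \binom{m+1}{2}n$ holds in both parities simultaneously with only $t\ge 4$, $n\ge 10$, $\binom{t}{2}<n$ available. A secondary subtlety is justifying that $n\ge10$ generic points of multiplicity $m$ impose independent conditions in \emph{all} the degrees $d\ge \alpha(I^{(m)})$ that occur here — this is where one cites the resolution of the relevant case of the Segre--Harbourne--Gimigliano--Hirschowitz conjecture for $\le 9$ points and the known results (\cite{refHi}, \cite{refM}) for $n\ge 10$ that handle exactly the homogeneous multiplicity case on a large number of generic points. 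Once those inputs are in hand, the remainder is the bookkeeping sketched above together with the finite check for $n\le 9$.
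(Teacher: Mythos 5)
Your central numerical input is not available: you assert that for $n\ge 10$ generic points of multiplicity $m$ the conditions imposed are independent in the relevant degrees, so that $\alpha(I^{(m)})$ is the least $d$ with $\binom{d+2}{2}>\binom{m+1}{2}n$, and you cite \cite{refHi} and \cite{refM} for this. Those results only cover multiplicities up to $3$ (resp.\ $4$); for arbitrary $m$ the statement you need is essentially the open SHGH/Nagata circle of conjectures (indeed it would give $\gamma(I)=\sqrt{n}$, i.e.\ Nagata's conjecture, which the paper explicitly notes is unknown for every nonsquare $n\ge 10$). The theorem concerns all pairs with $m/r>3/2$, hence arbitrarily large $m$, so you cannot reduce to the multiplicity-$3$ results used in Theorem \ref{8}. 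The paper sidesteps this by replacing the conjectural value of $\alpha(I^{(m)})$ with the unconditional Seshadri-constant bound $\varepsilon(n)\ge\sqrt{n-1}/n$ from \cite{refH1} (or [Xu] in characteristic $0$), which gives $\alpha(I^{(m)})\ge m\sqrt{n-1}$ and is enough for $n\ge 52$, with $10\le n\le 51$ handled by sharper known bounds case by case.

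There is a second, independent failure: even granting the conjectural (maximal possible) value of $\alpha(I^{(m)})$, your criterion $r\,{\rm reg}(I)\le\alpha(I^{(m)})$ does not hold for all $n$. For $n=11$ one has $\sigma(I)=5$ while $\gamma(I)\le\sqrt{11}$, so $\sigma(I)/\gamma(I)\ge 5/\sqrt{11}\approx 1.507>3/2$; taking $m$ minimal with $m/r>3/2$ and $r$ large, $\alpha(I^{(m)})\approx m\sqrt{11}<5r$, so Lemma \ref{postcrit2} alone cannot close the case $n=11$ (nor $n=7$, where $\sigma(I)/(n\varepsilon(2,7))=32/21>3/2$). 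Relatedly, your claimed identity $\alpha(I)=\sigma(I)={\rm reg}(I)=\omega(I)$ is false precisely in these cases (e.g.\ $\omega(I)=3<{\rm reg}(I)=4$ for $n=7$, and $\omega(I)=4<{\rm reg}(I)=5$ for $n=11$), and that strict gap is what the paper exploits: it invokes Chardin's bound ${\rm reg}(I^r)\le r\omega(I)+2({\rm reg}(I)-\omega(I))$ via Remark \ref{Omegarem} to get the refined sufficient condition $\alpha(I^{(m)})\ge r\omega(I)+2({\rm reg}(I)-\omega(I))$, which does settle $n=7$ and $n=11$. Your sketch also treats ``$n\le 9$ by direct computation as in Theorem \ref{8},'' but that theorem only handles $(m,r)=(3,2)$; for the full range $m/r>3/2$ you need exact Seshadri/$\gamma$ data for $n\le 9$ (available, as in the paper's table) plus the same $\omega$-refinement for $n=7$. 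So the elementary inequality bookkeeping you propose cannot be carried out uniformly for $n\ge 10$ as stated, and the missing refinements are not minor boundary checks but the substance of the paper's argument.
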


\begin{proof} This amounts to showing that $\rho(S_n) \leq 3/2$.
Again we can ignore $n = 1$, $2$ and $4$. For $n= 3$, $5$, $6$, $8$ and $9$ 
we use the upper bounds $\sigma(I)/(n\varepsilon(2,n))$ for $\rho(S_n)$ 
obtained from Corollary \ref{PCcor} with $c=n\varepsilon(2,n)$; 
the values of $n\varepsilon(2,n)$ can be obtained
from Nagata's list of abnormal curves \cite{refNag}, or see
\cite{refH4}. By Corollary \ref{Bndscor},
$\sigma(I)/(n\varepsilon(2,n))=\rho(S_n)$ when $n=3$ and $n=6$. (Using refined methods
that we will present in a subsequent paper, \cite{refBH2}, 
we can show that equality holds also for $n=8$ and 9, and that $\rho(S_5)=6/5$
and $\rho(S_7)=8/7$.) 
\[
\begin{array}{c|c|c|c}
n & n\varepsilon(2,n) & \sigma(I) & \frac{\sigma(I)}{n\varepsilon(2,n)} \\
\hline 
3 &  \frac{3}{2} & 2 & \frac{4}{3} \\
& & & \\
5 & 2 & 3 & \frac{3}{2}\\
& & & \\
6 & \frac{12}{5} & 3 & \frac{5}{4}\\
& & & \\
7 & \frac{21}{8} & 4 & \frac{32}{21} = 1.52\\
& & & \\
8 & \frac{48}{17} & 4 & \frac{17}{12} = 1.42\\
& & & \\
9 & 3 & 4 & \frac{4}{3}	
\end{array}
\]

In order to handle $n = 7$, we see we need a better bound,
which we obtain using Remark \ref{Omegarem}.
We claim $\rho(S_7)\le 6/5$. We must show that
if $m/r > 6/5$, then $\alpha(I^{(m)})\ge r\omega(I)+2({\rm reg}(I)-\omega(I))$,
where here $\omega(I)=3$ and ${\rm reg}(I)=4$
(see \cite{refH3} for the graded Betti numbers for the resolution of the ideal $I^{(m)}$
for any $m\ge 1$). From the Seshadri constant in the table above,
we know $\alpha(I^{(m)})\ge 21m/8$. (In fact, it turns out that
$\alpha(I^{(m)})=\lceil 21m/8\rceil$. Clearly $\alpha(I^{(m)})\ge \lceil 21m/8\rceil$,
and one checks the cases $m\le 8$ directly to see that equality holds.
For $m\ge 8$, write $m=8i+j$ with $0\le j<8$ and
use $\alpha(I^{(8i+j)})\le i\alpha(I^{(8)})+\alpha(I^{(j)})=\lceil 21m/8\rceil$.)
Thus $\lceil 21m/8\rceil\ge 3r+2$ (or, equivalently, $21m/8>3r+1$) implies
$\alpha(I^{(m)})\ge r\omega(I)+2({\rm reg}(I)-\omega(I))$.
But for $r>6$, $m/r\ge 6/5$ implies $21m/8>3r+1$ and hence
$\alpha(I^{(m)})\ge r\omega(I)+2({\rm reg}(I)-\omega(I))$.
We now check $r\le 6$ individually. If $r=1$, clearly for any $m\ge1$ we have
$I^{(m)}\subseteq I^r$. For $r=2$ and $m/r\ge 1.2$, we have $m\ge3$,
$\alpha(I^{(m)})\ge8$, and so $\alpha(I^{(m)})\ge8=3r+2$. 
Similarly for $r=3,4,5$ and 6. Thus $\rho(S_7)\le 1.2$.
(We cannot do better than $\rho(S_7)\le 1.2$ using this argument, since $m=6$ and $r=5$
give $m/r=1.2$, yet fail to satisfy $\alpha(I^{(m)})\ge 3r+2$.)

Now consider $n > 9$. It is known that $\varepsilon(n) \ge \sqrt{n-1}/n$.
See [Xu] for characteristic 0. It also follows from \cite{refH1}
in all characteristics, as follows. Let $s = \lfloor\sqrt{n}\rfloor$,
and define $0\le t \le s$ so that either $n = s^2+2t $ or $n = s^2 + 2t + 1$.
Let $d=s$ and $r=s^2 + t$.
First consider the case that $n = s^2+2t$.
Since $r/d \geq \sqrt{n}$, then $\varepsilon(n) \geq d/r$ by 
\cite{refH1}, and a little arithmetic shows that $d/r \geq \sqrt{n-1}/n$.
Now let $n = s^2+2t+1$.
Since now $r/d \leq  \sqrt{n}$ (keep in mind that $t < s$),
then $\varepsilon(n) \geq r/(nd)$ by \cite{refH1}, 
and it is easy to see that $r/(nd) \ge \sqrt{n-1}/n$.
So for $n > 9$ it is enough to check that
$\sigma(I)/(\sqrt{n-1}) \leq 3/2$.

Now, $\sigma(I) = t+1$ for the least t such that $\binom{t+2}{2} \geq n$.
Since for $t=(\sqrt{8n+1}-3)/2$ we have $\binom{t+2}{2} = n$, we see
$\sigma(I) \leq (\sqrt{8n+1}-3)/2 + 2$. It is not hard to check that
$((\sqrt{8n+1}-3)/2 + 2)/(\sqrt{n-1}) < 3/2$ for all $n \ge 52$.

We have left to deal with $10 \leq n \leq 51$. For these few cases we can
use the best lower bounds for $\varepsilon(n)$ given in \cite{refH1} 
(or the exact value if $n$ is a square) instead of $\sqrt{n-1}/n$,
and we can use the exact value of $\sigma(I)$
instead of $(\sqrt{8n+1}-3)/2 + 2$. Doing so we find that
$\sigma(I)/(n\varepsilon(n)) < 3/2$ except for $n = 11$ (in this case
even taking the conjectural value $\varepsilon(n) = 1/\sqrt{n}$ gives only
that $\sigma(I)/(n\varepsilon(n)) \leq 1.507)$, or $n = 17$, $22$ or $37$, in which case we
have $\varepsilon(n)$ being at least $4/17$, $7/33$ and $6/37$ resp., hence at least we
obtain $\sigma(I)/(n\varepsilon(n)) \leq 3/2$, but this suffices for the statement
of the theorem (however, see the remark that follows). 

For $n=11$, argue as for $n=7$. For $n=11$, we have $\omega(I)=4$,
and ${\rm reg}(I)=5$, so $r\omega(I)+2({\rm reg}(I)-\omega(I))=4r+2$.
Now $\rho(S_{11})\le c$ if we pick $c$ such that 
$m/r\ge c$ implies $\alpha(I^{(m)})\ge 4r+2$. But
$n\varepsilon(n)\ge\sqrt{10}$ so $\alpha(I^{(m)})\ge m\sqrt{10}\ge rc\sqrt{10}$, 
so we just need $c$ such that $rc\sqrt{10}>4r+1$ for $r\ge2$.
We see we need $c>4/\sqrt{10} + 1/(2\sqrt{10})=
(2{\rm reg}(I)-1)/(2\sqrt{n-1})$ so $c=1.43$ suffices;
i.e.,  $\rho(S_{11})\le 1.43$.
\end{proof}

\begin{Rmk} In a subsequent paper, \cite{refBH2}, we will compute $\rho(S)$ for sets of
points on irreducible plane conics. Our result for the case of 5 points on a smooth conic
is $\rho(S_5)=6/5$. Also, arguing in the case of $n=17, 22$ and 37
generic points as we did for $n=11$, we find, resp., that
$\omega(I)$ and ${\rm reg}(I)$ are 5, 6 and 8, and 6, 7 and 9, and hence that
$(2{\rm reg}(I)-1)/(2\sqrt{n-1})$ is 1.375, 1.418, 1.416, resp., so
$\rho(I)$ is, for example, at most 1.38, 1.42, and 1.42, resp.
Thus in fact we can state a slightly stronger version of
the preceding theorem:
for a generic set $S_n$ of $n$ points of ${\bf P}^2$,
$I^r$ contains $I^{(m)}$ whenever $m/r \geq 3/2$ (rather than just $m/r > 3/2$).
(Alternatively, assuming characteristic 0, we can 
handle the cases $n=17, 22$ and 37 simply by using a better
estimate for $\varepsilon(n)$: in characteristic 0, \cite{refB}
shows $\varepsilon(n)$ is at least $8/33$, $42/197$ and $12/73$, resp.)
\end{Rmk}

In fact, it may be possible that $\rho(S) \leq \sqrt{2}$
whenever $S$ is a generic finite set of points in ${\bf P}^2$.
[While this paper was under review we found that $\rho(S_8)=17/12>\sqrt{2}$ \cite{refBH2},
but we know no other cases for which $\rho(S) > \sqrt{2}$.]
In addition to Theorem \ref{SCcor}, the following result gives some evidence for this
possibility.

\begin{prop}\label{genProp} Let $S$ be a set of $n = (d+2)(d+1)/2 + i\ge10$ generic points
of ${\bf P}^2$ and $(d+4)/2 \le i \le d+2$. Then
$m/r \ge \sqrt{2}$ implies $I^{(m)} \subseteq I^r$.
\end{prop}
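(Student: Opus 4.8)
The plan is to deduce the statement from Corollary~\ref{PCcor}, applied with $c$ a known lower bound for $\gamma(I)$, once we have pinned down $\mathrm{reg}(I)=\sigma(I)$.

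First I would record the two invariants involved. Since $S$ is a reduced finite set of points, $I=I(S)$ is saturated and defines a $0$-dimensional subscheme, so $\mathrm{reg}(I)=\sigma(I)$. Because $n$ generic points impose independent conditions on forms of every degree, $\sigma(I)$ is the largest $t$ with $\binom{t}{2}<n$, exactly as in the proofs of Theorems~\ref{8} and~\ref{genThm}. The hypotheses $1\le (d+4)/2\le i\le d+2$ give $\binom{d+2}{2}<n\le\binom{d+3}{2}$, and hence $\mathrm{reg}(I)=\sigma(I)=d+2$. Next, by Lemma~\ref{subadd} we have $\gamma(I)=n\varepsilon(2,n)$, and arguing exactly as in the proof of Theorem~\ref{genThm} (via \cite{refH1}, or \cite{refXb} in characteristic $0$), since $n\ge10$ we have $n\varepsilon(2,n)\ge\sqrt{n-1}$.

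Now set $c=\sqrt{n-1}$. Then $mc\le m\,\gamma(I)\le\alpha(I^{(m)})$ for every $m\ge1$ (Remark~\ref{SCrem}), so Corollary~\ref{PCcor} applies with this $c$ and gives $I^{(m)}\subseteq I^r$ whenever $m/r\ge\mathrm{reg}(I)/c=(d+2)/\sqrt{n-1}$. It therefore remains only to check that $(d+2)/\sqrt{n-1}\le\sqrt{2}$, i.e.\ that $(d+2)^2\le 2(n-1)$. Writing $n=\binom{d+2}{2}+i=\tfrac{(d+1)(d+2)}{2}+i$, this inequality rearranges to $i\ge\tfrac{(d+2)^2-(d+1)(d+2)+2}{2}=\tfrac{d+4}{2}$, which is precisely the lower bound on $i$ in the hypothesis. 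Hence $\mathrm{reg}(I)/c\le\sqrt{2}$, and so $m/r\ge\sqrt{2}\ge\mathrm{reg}(I)/c$ implies $I^{(m)}\subseteq I^r$.

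There is no genuine obstacle here: the argument is a direct application of Corollary~\ref{PCcor}, and the only content is the elementary computation showing that the hypothesis $i\ge(d+4)/2$ is exactly what forces $(d+2)^2\le 2(n-1)$. The one point worth noting is that Corollary~\ref{PCcor} supplies a \emph{non-strict} containment criterion, so the non-strict hypothesis $m/r\ge\sqrt{2}$ (rather than $m/r>\sqrt{2}$) causes no difficulty.
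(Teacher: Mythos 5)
Your proposal is correct and follows essentially the same route as the paper: the paper applies Lemma \ref{postcrit2} (equivalently Corollary \ref{PCcor}) with $\sigma(I)=d+2$ and the bound $\varepsilon(S)\ge\sqrt{n-1}/n$ for $n\ge10$, and the same arithmetic shows the hypothesis $i\ge(d+4)/2$ is exactly what gives $(d+2)/\sqrt{n-1}\le\sqrt{2}$. Your explicit verification of the $c$-hypothesis via Remark \ref{SCrem} and the non-strict inequality remark are fine and consistent with the paper.
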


\begin{proof} By Lemma \ref{postcrit2} (Postulational Criterion 2),
$I^{(m)} \subseteq I^r$ if $r\sigma(I) \le \alpha(I^{(m)})$, and hence if
$m/r \ge (\sigma(I))/(nc)$, where $\varepsilon(S) \ge c$.
But here $\sigma(I)=d+2$ (since by our choice of $i$ we have
$\binom{d+2}{2}<n\le \binom{d+3}{2}$) and, as in the proof of Theorem \ref{genThm},
we can take $c=\sqrt{n-1}/n$ since $n\ge10$. A little arithmetic using $(d+4)/2 \le i$ now shows that
$\sqrt{2}\ge (\sigma(I))/(nc)$.
\end{proof}

\begin{Ex} By the main theorems of \cite{refELS} and \cite{refHH1}, 
$I^{(4)}\subseteq I^2$ for $I=I(S)$ for any finite subset 
$S\subseteq {\bf P}^2$. Thus, in addition to asking, as Huneke did, if 
$I^{(3)}\subseteq I^2$, one might also ask if $I^{(4)}\subseteq I^3$ or 
if $I^{(6)}\subseteq I^4$. We close by showing that the answer for the latter 
two is no.

In particular, let $I=I(S)$ where $S=S_2(2,s)$ is the set of $n=\binom{s}{2}$ points of 
pairwise intersection of $s$ general lines
in ${\bf P}^2$. It is easy to check that
$\alpha(I^{(3)})=2s-1$, since any form in $I^{(3)}$ of degree $2s-2$
must, by Bezout, vanish on each of the $s$ lines, giving a form
of degree $s-2$ in $I$, but $\alpha(I)=s-1$, either by Bezout again
or by Lemma \ref{alphaLemma}. (Similarly, it follows
that $\alpha(I^{(m)})=((m+1)/2)s-1$ whenever $m$ is odd.)
Now by Lemma \ref{postcrit2}, using Lemma \ref{regLem},
it follows that $I^{(3)}\subseteq I^2$ for all $s$, and 
by Lemma \ref{postcrit1}, using Lemma \ref{alphaLemma},
it follows that $I^3$ does not contain $I^{(4)}$ for $s>3$
and that $I^4$ does not contain $I^{(6)}$ for $s>4$.
\end{Ex}


\begin{thebibliography}{CHHT}

\bibitem[AV]{refAV} A. Arsie and J.\ E.\ Vatne. {\it
A Note on Symbolic and Ordinary Powers of Homogeneous Ideals},
Ann. Univ. Ferrara - Sez. VII - Sc. Mat. Vol. IL, 19-30 (2003)
(http://www.uib.no/People/nmajv/03.pdf).

\bibitem[AM]{refAM} M. Atiyah and I. G. Macdonald. {\it Introduction to Commutative Algebra}, 
Addison-Wesley, Reading, MA, 1969, ix+128 pp.

\bibitem[B]{refB} P.\ Biran. {\it Constructing new ample
divisors out of old ones}, Duke Math. J. 98 (1999), no. 1, 113--135.

\bibitem[BH]{refBH2} C.\ Bocci and B.\ Harbourne. {\it The resurgence of ideals of points and the containment problem}, to appear, Proc.\ Amer.\ Math.\ Soc.

\bibitem[Ch1]{refCh} M. Chardin. {\it Regularity of ideals and their powers}, Pr\'epublication 364, 
Institut de Mathematiques de Jussieu, 2004.

\bibitem[Ch2]{refCh2} M. Chardin. {\it On the behavior of Castelnuovo-Mumford regularity with 
respect to some functors}, preprint, 2007.

\bibitem[CHHT]{refCHHT} S.\ D.\ Cutkosky, H.\ T.\ Ha, H.\ Srinivasan and E.\ Theodorescu.
{\it Asymptotic behaviour of the length of local cohomology}, 
Canad. J. Math. 57 (2005), no. 6, 1178--1192.

\bibitem[D]{refD}
J.P. Demailly. {\it Singular Hermitian metrics on positive line bundles},
Complex Algebraic Varieties (Bayreuth 1990) (K.~Hulek et~al., eds.), LNM, vol.
1507, Springer, 1992, pp.~87--104.

\bibitem[ELS]{refELS} L. Ein, R. Lazarsfeld and K. Smith. {\it Uniform bounds and 
symbolic powers on smooth varieties}, Invent. Math. 144 (2001), p. 241-252.

\bibitem[EHU]{refEHU} D. Eisenbud, C. Huneke and B. Ulrich. {\it
The regularity of Tor and graded Betti numbers}, Amer. J. Math. 128 (3), 2006, 573--605.

\bibitem[GGP]{refGGP} A.\ V.\ Geramita, A.\ Gimigliano and Y.\ Pitteloud. {\it Graded 
Betti numbers of some 
embedded rational $n$-folds}, Math.\ Annalen 301 (1995), 363-380. 

\bibitem[GHM]{refGHM} A.\ V.\ Geramita, B.\ Harbourne and J.\ Migliore.
{\it Classifying Hilbert functions of fat point subschemes in ${\bf P}^2$}, 
in preparation.

\bibitem[GMS] {refGMS} A.\ V.\ Geramita, J.\ Migliore and L.\ Sabourin. {\it 
On the first infinitesimal neighborhood of a 
linear configuration of points in ${\bf P}^2$}, 
J. Algebra 298 (2006), no. 2, 563--611.

\bibitem[GuH]{refGuardoHar} E.\ Guardo and B.\ Harbourne. {\it Resolutions 
of ideals of any six fat points in ${\bf P}^2$}, J. Alg. 318 (2), 619--640 (2007).

\bibitem[H1]{refH1} B.\ Harbourne. {\it Seshadri constants and very ample
divisors on algebraic surfaces}, J. Reine Angew. Math. {\bf 559}
(2003) 115--122.

\bibitem[H2]{refH2} B.\ Harbourne. {\it Complete linear systems on rational 
surfaces}, Trans. Amer. Math. Soc. 289, 213--226 (1985). 

\bibitem[H3]{refH3} B.\ Harbourne. {\it An Algorithm for Fat Points on ${\bf P}^2$}, 
Can. J. Math. 52 (2000), 123--140.

\bibitem[H4]{refH4} B.\ Harbourne. {\it On Nagata's Conjecture}, J. Algebra 236 (2001), 692--702.

\bibitem[HR1]{refHR} B.\ Harbourne and J.\ Ro\'e. {\it Discrete Behavior of Seshadri Constants on Surfaces},
Journal of Pure and Applied Algebra, 212 (2008), 616--627. 

\bibitem[HR2]{refHR2} B.\ Harbourne and J.\ Ro\'e. {\it Extendible Estimates of multipoint 
Seshadri Constants},
preprint, math.AG/0309064, 2003. 

\bibitem[HR3]{refHR3} B.\ Harbourne and J.\ Ro\'e. {\it Computing multi-point Seshadri constants on ${\bf P}^2$},
to appear, Bulletin of the Belgian Mathematical Society - Simon Stevin.

\bibitem[Ht]{refHt} R.\ Hartshorne. {\it Algebraic Geometry},
Springer-Verlag, New York, 1977, xvi $+$ 496.

\bibitem[Hi]{refHi} A.\ Hirschowitz.
{\it La m\'ethode d'Horace pour l'interpolation \`a plusieurs
variables}, Manus. Math. 50 (1985), 337--388.

\bibitem[Ho]{refHo} M.\ Hochster. {\it Criteria for equality of ordinary and 
symbolic powers of primes}, Math. Z. 1973, 133, 53--65.

\bibitem[HH1]{refHH1}
M.\ Hochster and C.\ Huneke. {\it Comparison of symbolic and ordinary powers of
ideals}, Invent. Math. {\bf 147} (2002), no.~2, 349--369.

\bibitem[HH2]{refHH2} C.\ Huneke and M.\ Hochster. {\it Fine behavior of symbolic powers of ideals},
preprint, 2006.

\bibitem[K]{refK} V.\ Kodiyalam. {\it Asymptotic behaviour of Castelnuovo-Mumford regularity},
Proc. Amer. Math. Soc., Vol 128, P. 407-411, 2000.

\bibitem[LS]{refLS} A.\ Li and I.\ Swanson. {\it Symbolic powers of radical ideals}, 
Rocky Mountain J. of Math. 36 (2006), 997--1009.

\bibitem[Ma]{refMa} H.\ Matsumura. {\it Commutative Algebra}, W. A. Benjamin, New York, 1970, xii+262 pp.

\bibitem[Mi]{refM}
T.\ Mignon. {\it Syst\`emes de courbes planes \`a singularit\'es 
impos\'ees: le cas des multiplicit\'es inf\'erieures ou \'egales 
\`a quatre}, J.\ Pure Appl.\ Algebra 151 (2000), no. 2, 173--195. 

\bibitem[N]{refNag} M.\ Nagata. {\it On rational surfaces, II}, Mem.\ Coll.\
Sci.\ Univ.\ Kyoto, Ser.\ A Math.\ 33 (1960), 271--293.

\bibitem[PSC]{refPSC} T.\ Bauer, S.\ Di Rocco, B.\ Harbourne, M.\ Kapustka, A.\ Knutsen, W.\
Syzdek, and T.\ Szemberg. {\it A primer on Seshadri constants}, 
to appear in the AMS Contemporary Mathematics series volume 
``Interactions of Classical and Numerical Algebraic Geometry,"
Proceedings of a conference in honor of A.J. Sommese, held at Notre Dame, May 22--24 2008.

\bibitem[R]{refR} J.\ Ro\'e. {\it A relation between one-point and multi-point Seshadri constants},  
J. Algebra 274, 643-651 (2004).
 
\bibitem[S]{refS} I.\ Swanson. {\it Linear equivalence of topologies}, 
Math. Zeitschrift, 234 (2000), 755--775.

\bibitem[ST]{refST} T. Szemberg and H. Tutaj-Gasi\'nska. {\it General
blow ups of the projective plane}, Proc. Amer. Math. Soc. {\bf 130}
(2002), no.~9, 2515--2524.

\bibitem[TY]{refTY} S. Takagi and K. Yoshida. {\it Generalized test ideals and symbolic powers},
preprint, 2007, math.AC/0701929.

\bibitem[Te]{refTe} Z. Teitler. {\it On the intersection of the curves through a set of 
points in ${\bf P}^2$}, to appear, Journal of Pure and Applied Algebra.

\bibitem[Tu]{refT} H. Tutaj-Gasi{\'n}ska. {\it A bound for {S}eshadri
constants on ${\bf P}^2$}, Math. Nachr. {\bf 257} (2003), no.~1, 108--116.

\bibitem[X]{refXb}
G. Xu. {\it Ample line bundles on smooth surfaces}, J. Reine Ang. Math.
{\bf 469} (1995), 199--209.


\end{thebibliography}
\end{document}